\newtheorem{thm}{Theorem}[section]
\newtheorem{lem}[thm]{Lemma}
\newtheorem{rem}[thm]{Remark}
\renewcommand{\le}{\leqslant} 
\renewcommand{\ge}{\geqslant} 
\newcommand{\ra}{\rangle}
\newcommand{\la}{\langle} 
\newcommand{\wt}{\widetilde}
\newcommand{\ind}{\mathds{1}}
\newcommand{\eps}{\varepsilon}
\newcommand{\norm}[1]{\left\Vert#1\right\Vert}
\newcommand{\abs}[1]{\left\vert#1\right\vert}
\newcommand{\ie}{\emph{i.e.,}}
\let\ga=\alpha \let\gb=\beta \let\gc=\gamma  
\let\gf=\varphi     \let\gl=\lambda         \let\gs=\sigma \let\gt=\tau 
  \let\gz=\zeta
 \let\gD=\Delta   
\let\gO=\Omega           
\newcommand{\cA}{\mathcal{A}}
\newcommand{\cI}{\mathcal{I}}
\newcommand{\cR}{\mathcal{R}}
\newcommand{\cV}{\mathcal{V}}
\newcommand{\vone}{\mathbf{1}}
\newcommand{\mvR}{\boldsymbol{R}}
\newcommand{\dC}{\mathds{C}}
\newcommand{\dR}{\mathds{R}}
\newcommand{\dZ}{\mathds{Z}} 
\DeclareMathOperator{\E}{\mathds{E}}
\DeclareMathOperator{\pr}{\mathds{P}}
\DeclareMathOperator{\var}{Var}
\DeclareMathOperator{\cov}{Cov}
\newcommand{\wh}[1]{\widehat{#1}}
\newcommand{\ol}[1]{\overline{#1}}
\begin{document}
\title[Size of vacant set]{Fluctuation results for size of the vacant set for random walks on discrete torus}
\author[Dey]{Partha Dey}
\author[Kim]{Daesung Kim}

\address{University of Illinois at Urbana Champaign, 1409 W Green Street, Urbana, Illinois 61801}
\email{psdey@illinois.edu, daesungk@illinois.edu}

\date{\today}
\subjclass[2010]{Primary: 60G50, 60F99.}
\keywords{Random Walk, Variance, Green's function, Random interlacement.}

\begin{abstract}
	We consider one or more independent random walks on the $d\ge 3$ dimensional discrete torus. The walks start from vertices chosen independently and uniformly at random. We analyze the fluctuation behavior of the size of some random sets arising from the trajectories of the random walks at a time proportional to the size of the torus. Examples include vacant sets and the intersection of ranges. The proof relies on a refined analysis of tail estimates for hitting time and can be applied for other vertex-transitive graphs.
\end{abstract}

\maketitle

\section{Introduction}
Consider finitely many independent random walks on the $d\ge 3$ dimensional discrete torus with large side length, each starting from vertices chosen independently and uniformly at random. We are interested in the mean and fluctuation behavior of the size of some random sets arising from the trajectories of the random walks at a time proportional to the size of the torus. Examples include the vacant set or the set of vertices not visited by any of the walks; the size of the intersection of ranges, among others.

In particular, we fix a positive integer $\ell\ge 1$, where $\ell$ denotes the number of independent random walks. The discrete torus is denoted by $\dZ_n^d=(\dZ/n\dZ)^{d}$, with side length $n$. Note that $n$ controls the size of the graph and we are interested in the large $n$ limit with a fixed $d$. We consider $\ell$ many independent $\frac12$\ndash lazy random walks $(X_{t,i})_{t\ge 0}, i\in[\ell]:=\{1,2,\ldots,\ell\}$ starting from $X_{0, i}=\xi_{i}, i\in[\ell]$, respectively, where $\xi_{i}$'s are i.i.d.~uniformly distributed over $\dZ_n^d$. We will use $\pi=\pi_{n}$ to denote the uniform distribution over $\dZ_n^d$.

We define the range of the $i$-th random walk at time $t$ as
\begin{align}
	\cR_{i}(t):=\{X_{s,i}\mid s=0,1,\ldots,t\} \quad \text{ for } i=1,2,\ldots,\ell;
\end{align}
the size of vacant set, \ie\ the number of vertices not visited by any of the $\ell$ walks at time $t$ as
\begin{align}\label{def:v}
	V_{n}^{(\ell)}(t):=\abs{\dZ_{n}^{d}\setminus\cup_{i=1}^{\ell}\cR_{i}(t)},
\end{align}
and the size of the intersection of ranges at time $t$ as
\begin{align}\label{def:r}
	R_{n}^{(\ell)}(t):=\abs{\cap_{i=1}^{\ell}\cR_{i}(t)}.
\end{align}
If we define $\tau_{i}(v)$ as the hitting time at the vertex $v$ for the $i$-th random walk, \ie
\begin{align*}
	\tau_{i}(v):=\min\{t\ge 0\mid X_{t,i}=v\},
\end{align*}
then we have for all $t\ge 0$
\begin{align*}
	V_n^{(\ell)}(t) = \sum_{v\in\dZ_n^d} \prod_{i=1}^{\ell}\vone_{\{\tau_{i}(v)>t\}}
	\text{ and }
	R_n^{(\ell)}(t) = \sum_{v\in\dZ_n^d} \prod_{i=1}^{\ell}\vone_{\{\tau_{i}(v)\le t\}}.
\end{align*}
One can easily check that
$$\E V_{n}^{(\ell)}(t) = n^{d}\pr_{\pi}(\tau_{1}(0)>t)^{\ell} \text{ and } \E R_{n}^{(\ell)}(t) = n^{d}\pr_{\pi}(\tau_{1}(0)\le t)^{\ell}.$$
In particular when $t/n^{d}\to u$ we have $n^{-d}\E V_{n}^{(\ell)}(t) \to e^{-\ell u/G(0)}$ and $n^{-d}\E R_{n}^{(\ell)}(t) \to (1-e^{-u/G(0)})^{\ell}$ as $n\to\infty$ where
\begin{align*}
	G(\xi)=G_{1/2}(\xi)
	 & =\text{ expected number of visits to $\xi$}\notag\\
	 & \qquad\text{ for a $1/2$\ndash lazy random walk on $\dZ^{d}$ starting from } 0.
\end{align*}
The above result follows from standard literature, for instance see~\cite{BH91}*{Equation~(2.26)},~\cite{AB92}*{Theorem~1} or~\cite{TW2011}*{Proposition~3.7}. Moreover, for an $\eps$\ndash lazy random walk, one can easily check that $G_{\eps}(\cdot)=(1-\eps)^{-1}G_{\ast}(\cdot)$, where $G_{\ast}$ is the classical Green's function for the simple random walk on $\dZ^{d}$. If needed, we will use $G(0;\dZ^{d})$ instead of $G(0)$ to emphasize the dependence on the dimension $d$.

For $\xi\in\dZ^d_n$, we define
\begin{align*}
	g_n(\xi) & := \sum_{t=0}^\infty (\pr_0(X_t = \xi)-\pr_{\pi}(X_t = \xi))\\
	\text{ and }
	g_n'(\xi) & := \sum_{t=1}^\infty t(\pr_0(X_t = \xi)-\pr_{\pi}(X_t = \xi)).
\end{align*}
Note that, heuristically $g_n(0)$ is the difference between the expected number of visits to 0 by two $\frac12$\ndash lazy random walks on $\dZ^d_n$ starting from the origin and the uniform distribution, respectively, up to a large multiple of the mixing time. Moreover, for fixed $\xi\in\dZ^{d}$, we have $g_{n}(\xi)\to G(\xi)$ as $n\to\infty$. One can easily check that
\[
	g'_n(0)=\sum_{\xi\in\dZ^d_n}g_n(\xi)^2 - g_n(0)>0.
\]
We remark that $g_n'(0)$ stays bounded in $d\ge 5$ and $g_n'(0)$ grows at rate $\log n$ when $d=4$, and at rate $n$ when $d=3$. See Lemma~\ref{lem:gnorder} for an upper bound on the growth rate of $g_n(\xi)$ and $g_n'(\xi)$. We write down the mean behavior with first order correction in the following Lemma.

\begin{lem}\label{lem:mean}
	Let $d\ge 3$ and $\ell\ge 1$ be fixed. Define $u:=(t+1)/n^{d} \in (0,\infty)$, then
	\begin{align*}
		\E(V_n^{(\ell)}(t))
		=n^d e^{-\frac{\ell u}{g_n(0)}} + \ell\left( \frac{ug_n'(0)}{g_n(0)^3}+\frac{u}{2g_n(0)^2}-\frac{g'_n(0)}{g_n(0)^2}\right) + O(n^{-d+3}).
	\end{align*}
\end{lem}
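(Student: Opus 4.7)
The plan is to start from the identity $\E V_n^{(\ell)}(t) = n^d \pr_\pi(\tau_1(0)>t)^\ell$ noted in the introduction, reduce to a two-term expansion of $p_t := \pr_\pi(\tau_1(0)>t)$ uniform for $t$ of order $n^d$, and then raise to the $\ell$-th power via binomial expansion and multiply by $n^d$. The main analytic input is a closed-form generating function. Using the identity $\E_x z^{\tau_0} = G_n(x,0;z)/G_n(0,0;z)$ with $G_n(x,y;z) = \sum_{t \ge 0} z^t \pr_x(X_t = y)$, averaging over $x \sim \pi$, and simplifying $\sum_x \pi(x) G_n(x,0;z) = (n^d(1-z))^{-1}$ gives $\E_\pi z^{\tau_0} = (n^d(1-z) G_n(0,0;z))^{-1}$. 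After splitting off the stationary part via $G_n(0,0;z) = (n^d(1-z))^{-1} + \tilde G_n(z)$ with $\tilde G_n(z) := \sum_{t \ge 0} z^t(\pr_0(X_t=0) - n^{-d})$, this becomes
\[
    F(z) := \sum_{t \ge 0} z^t p_t = \frac{n^d \tilde G_n(z)}{1 + n^d(1-z)\tilde G_n(z)},
\]
and crucially $\tilde G_n(1) = g_n(0)$ and $\tilde G_n'(1) = g_n'(0)$, so the quantities appearing in the lemma arise naturally.

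The next step is complex-analytic singularity analysis. The dominant singularity of $F$ is a simple pole at $z_* > 1$ solving $n^d(z_*-1)\tilde G_n(z_*) = 1$, which by bootstrapping through the Taylor expansion of $\tilde G_n$ at $z=1$ gives $z_* - 1 = 1/(n^d g_n(0)) - g_n'(0)/(n^{2d} g_n(0)^3) + O(\text{smaller})$. A direct residue computation yields $-\mathrm{Res}_{z_*} F = \bigl(1 + (z_*-1) \tilde G_n'(z_*)/\tilde G_n(z_*)\bigr)^{-1} = 1 - g_n'(0)/(n^d g_n(0)^2) + O(\text{smaller})$. Meanwhile, using $\log(1+\eps) = \eps - \eps^2/2 + \ldots$ at $\eps = z_*-1$, together with $u = (t+1)/n^d$, one gets
\[
    z_*^{-(t+1)} = e^{-u/g_n(0)}\bigl(1 + \tfrac{u g_n'(0)}{n^d g_n(0)^3} + \tfrac{u}{2 n^d g_n(0)^2} + O(\text{smaller})\bigr).
\]
Combining $p_t = -\mathrm{Res}_{z_*} F \cdot z_*^{-(t+1)} + (\text{sub-leading})$ and pairing with $(1+x)^\ell = 1 + \ell x + O(x^2)$ then produces the claimed formula after multiplication by $n^d$.

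The main technical obstacle is to justify the residue calculus and control all error terms to the claimed $O(n^{-d+3})$ accuracy. Two sources need attention: (i) contributions from secondary singularities of $F$, which correspond to sub-dominant eigenvalues of the killed chain on $\dZ_n^d \setminus \{0\}$; thanks to the order-$n^{-2}$ spectral gap of the lazy walk, such singularities are located at distance $\gtrsim n^{-2}$ from $z = 1$, so for $t$ of order $n^d$ they contribute terms of order $(1 + \Theta(n^{-2}))^{-t} = e^{-\Theta(n^{d-2})}$, super-polynomially small; (ii) higher-order Taylor terms of $\tilde G_n$ at $z=1$, involving $\sum_t t^k(\pr_0(X_t=0)-n^{-d})$ for $k \ge 2$; these sums admit growth bounds in the same spirit as Lemma~\ref{lem:gnorder}, and when paired with $(z_*-1)^2 = O(n^{-2d})$ and the outer $n^d$ factor produce the stated $O(n^{-d+3})$ error. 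A useful consistency check is $\sum_t p_t = \E_\pi \tau_0 = n^d g_n(0)$ (via the fundamental-matrix identity $\E_x \tau_y = (Z_{yy}-Z_{xy})/\pi(y)$), which verifies that the three correction pieces assemble with the correct coefficients.
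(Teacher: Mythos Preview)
Your approach is correct and essentially coincides with the paper's. The paper does not give a standalone proof of Lemma~\ref{lem:mean}; it is the special case $\xi=0$ of Lemma~\ref{lem:tailprob} (noting $f_n(0)=g_n(0)$, $f_n'(0)=g_n'(0)$), followed by raising to the $\ell$-th power and multiplying by $n^d$. Your generating function $F(z)$ is exactly Lemma~\ref{lem:series} at $\xi=0$, your dominant-pole bootstrap is the $\xi=0$ instance of the root localization in Lemma~\ref{lem:roots}, your residue extraction is the partial-fraction formula of Lemma~\ref{lem:decomp}, and your error controls (i) and (ii) match respectively the bound $|e(t)|\le \zeta_1^{-t}=e^{-\Theta(n^{d-2})}$ and the $f_n''=O(n^3)$ estimate from Lemma~\ref{lem:gnorder} used in Section~\ref{sec:proofA}.
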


Our first main result is the following explicit variance for $V_{n}^{(\ell)}(t)$. Define
\begin{align}\label{def:var}
	\sigma_{n,\ell}^2(t):=\var(V_{n}^{(\ell)}(t)).
\end{align}
\begin{thm}\label{thm:var5}
	Let $d\ge 5$ and $\ell\ge 1$ be fixed. Assume that $t/n^{d}\to u\in (0,\infty)$. There exists a function $\nu_d:(0,\infty)\to (0,\infty)$ such that
	\begin{align*}
		\lim_{n\to\infty}\frac{1}{n^d}\gs_{n,\ell}^2(t) = \nu_d(2\ell u/G(0)).
	\end{align*}
	Moreover, we have an explicit formula for $\nu_{d}(u)$ given by
	\begin{align}\label{eq:nud}
		\nu_d(u)
		=e^{-u}\sum_{\xi \in \dZ^{d}} \left(\exp\left(\frac{u G(\xi)}{G(0)+G(\xi)}\right) -1- \frac{uG(\xi)}{G(0)+G(\xi)} + \frac{uG(\xi)^2(G(\xi)-\ind_{\{\xi=0\}})}{G(0)^2(G(0)+G(\xi))}\right).
	\end{align}
\end{thm}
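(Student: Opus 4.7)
The plan is to express the variance via joint hitting-time probabilities, then combine sharp one- and two-point tail estimates. First I would expand, using independence of the $\ell$ walks,
\begin{align*}
\gs_{n,\ell}^2(t) = \sum_{v,w\in\dZ_n^d}\left[\pr_\pi(\tau(v)>t,\tau(w)>t)^\ell - \pr_\pi(\tau(v)>t)^\ell\pr_\pi(\tau(w)>t)^\ell\right].
\end{align*}
By translation invariance on the torus, the joint hitting probability depends only on $\xi = v - w$, so setting $p_n(t):=\pr_\pi(\tau(0)>t)$ and $q_n(\xi;t):=\pr_\pi(\tau(0)>t,\tau(\xi)>t)$,
\begin{align*}
\gs_{n,\ell}^2(t) = n^d\sum_{\xi\in\dZ_n^d}\left[q_n(\xi;t)^\ell - p_n(t)^{2\ell}\right].
\end{align*}
This reduces the problem to sharp asymptotics for these two probabilities, together with uniform control when summing over $\xi$.

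Second, I would establish refined exponential asymptotics for both $p_n(t)$ and $q_n(\xi;t)$. The capacity heuristic $\pr_\pi(\tau(A)>t) \approx \exp(-t\,\mathrm{cap}(A)/n^d)$ gives, for $A=\{0\}$, capacity $1/g_n(0)$; while for $A=\{0,\xi\}$ the symmetric equilibrium equation (using $g_n(\xi)=g_n(-\xi)$ and solving for $\mu(0)=\mu(\xi)$) yields capacity $2/(g_n(0)+g_n(\xi))$. Concretely, I would prove
\begin{align*}
p_n(t) = \exp\!\Bigl(-\tfrac{t}{n^d g_n(0)}\Bigr)(1+\eps_n(t)) \quad\text{and}\quad q_n(\xi;t) = \exp\!\Bigl(-\tfrac{2t}{n^d(g_n(0)+g_n(\xi))}\Bigr)(1+\eps_n(\xi;t)),
\end{align*}
with the second-order corrections $\eps_n$ tracked explicitly in terms of $g_n$, $g_n'$, and the mixing/relaxation time. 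These refined tail estimates, advertised in the abstract, form the main technical input.

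Third, setting $\tilde u:=2\ell u/G(0)$ and using $g_n(\xi)\to G(\xi)$ for each fixed $\xi\in\dZ^d$, the dominant contribution to each summand converges pointwise to
\begin{align*}
e^{-\tilde u}\left[\exp\!\Bigl(\tfrac{\tilde u G(\xi)}{G(0)+G(\xi)}\Bigr)-1\right].
\end{align*}
The obstacle is that this pointwise limit is \emph{not} absolutely summable over $\dZ^d$, since $G(\xi)\asymp|\xi|^{2-d}$ makes the summand decay only like $|\xi|^{2-d}$. To bypass this I would exploit the torus identity $\sum_{\xi\in\dZ_n^d}g_n(\xi)=0$ to subtract a mean-zero piece linear in $g_n(\xi)$ from each summand---a free operation at finite $n$ that in the limit produces the cancellation $-\tilde u G(\xi)/(G(0)+G(\xi))$. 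Carefully tracking the next-order $\eps_n$ corrections together with the identity $\sum_\xi g_n(\xi)^2 = g_n(0)+g_n'(0)$ then contributes the third summand $\tilde u G(\xi)^2(G(\xi)-\ind_{\{\xi=0\}})/[G(0)^2(G(0)+G(\xi))]$ appearing in~\eqref{eq:nud}. After these cancellations, each term decays like $|\xi|^{4-2d}$ at infinity, which is absolutely summable precisely when $d\ge 5$.

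I conclude by a split-and-dominate argument: for $|\xi|\le K$ use pointwise convergence $g_n\to G$; for $|\xi|>K$ use Lemma~\ref{lem:gnorder} to dominate the summand by $C|\xi|^{4-2d}$; then let $n\to\infty$ followed by $K\to\infty$. The hardest step will be producing the error estimates $\eps_n(\xi;t)$ uniformly in $\xi$ with decay sharp enough that their aggregate contribution to $n^{-d}\gs_{n,\ell}^2(t)$ is $o(1)$. A secondary subtle point is controlling contributions from $\xi$ near the scale of $n$, where torus wrap-around makes $g_n(\xi)$ deviate from $G(\xi)$; showing these tail contributions vanish is where the dimension assumption $d\ge 5$ is ultimately used, through the growth bounds on $g_n, g_n'$ in Lemma~\ref{lem:gnorder}.
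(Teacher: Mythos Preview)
Your proposal is correct and follows essentially the same route as the paper: reduce to $\sum_\xi[q_n(\xi;t)^\ell - p_n(t)^{2\ell}]$, invoke the refined two-point tail estimate (Lemma~\ref{lem:tailprob}), exploit $\sum_\xi g_n(\xi)=0$ and $\sum_\xi g_n(\xi)^2 = g_n(0)+g_n'(0)$ for the cancellations, and finish by dominated convergence via $\sum_\xi G(\xi)^2<\infty$. Two minor corrections: the pointwise decay bound you need for the domination step is Lemma~\ref{lem:gndecay} (uniform decay of $g_n(\xi)$ in $\norm{\xi}$) rather than Lemma~\ref{lem:gnorder}, and the paper obtains the uniform-in-$\xi$ tail expansion you call $\eps_n(\xi;t)$ not from mixing-time estimates but from the generating-function identity of Lemma~\ref{lem:series} together with the partial-fraction/root-location analysis of Section~\ref{sec:series}.
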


Note that the limiting variance in~\eqref{eq:nud} is strictly positive as $e^{x}-1-x\ge x^{2}/2>0$ for $x>0$ and $G(0)>1$. Also, it is finite as $\sum_{\xi\in\dZ^{d}} G(\xi)^{2}<\infty$ for $d\ge 5$.

One can see from the proof of Theorem~\ref{thm:var5} that $n^{-d}\gs_{n,\ell}^2(t)$ is governed by $\sum_{\xi\in\dZ^d_n} g_n(\xi)^2$. This sum can be written as $g_n'(0)+g_n(0)$, whose growth rate is
\[
	h_d(n):=
	\begin{cases}
		n   & \text{ when } d=3,       \\
		\log n & \text{ when } d=4 \text{ and}, \\
		1   & \text{ when } d\ge 5.
	\end{cases}
\]Thus, it is expected that the correct scaling order for the variance is $n^4$ when $d=3$ and $n^4\log n$ when $d=4$, and heuristically the variance with right scaling converges to the sum of the second order term of the exponential in~\eqref{eq:nud}.
The following theorem affirms that this is indeed the case.

\begin{thm}\label{thm:var34}
	Let $d\in\{3,4\}$ and $\ell\ge 1$ be fixed. Assume that $t/n^{d}\to u\in (0,\infty)$. Then we have
	\begin{align*}
		\lim_{n\to\infty}\frac{1}{n^d h_d(n)}\gs_{n,\ell}^2(t) = \nu_d(2\ell u/G(0)).
	\end{align*}
	where $h_3(n) = n$, $h_4(n)=\log n$,
	\begin{align}\label{eq:nud34}
		\nu_d(u)=\frac12 \ga_d u^2 e^{-u},
	\end{align}
	and
	\begin{align*}
		\ga_3 :=\frac{9}{\pi^4\cdot G(0;\dZ^3)^2} \sum_{v\in\dZ^3} \norm{v}^{-4}, \quad
		\ga_4 := \frac{16}{\pi^4\cdot G(0;\dZ^4)^2}\lim_{n\to\infty}\frac{1}{\log n}\sum_{\|v\|\le n}\|v\|^{-4}.
	\end{align*}
\end{thm}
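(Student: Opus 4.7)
The plan is to mimic the derivation behind Theorem~\ref{thm:var5} while carefully tracking terms that remain bounded in $d\ge 5$ but diverge at rate $h_d(n)$ in $d\in\{3,4\}$. Using independence of the $\ell$ walks and translation invariance under the uniform start, the variance reduces to
\[
\sigma_{n,\ell}^2(t) = n^d\sum_{\xi\in\dZ_n^d}\bigl(\pr(\tau_1(0)>t,\,\tau_1(\xi)>t)^{\ell} - \pr(\tau_1(0)>t)^{2\ell}\bigr),
\]
and the refined hitting-time tail estimates underlying Theorem~\ref{thm:var5} should yield, with $s_n := 2\ell(t+1)/(n^d g_n(0))\to s:=2\ell u/G(0)$,
\[
\frac{\sigma_{n,\ell}^2(t)}{n^d} = e^{-s_n}\sum_{\xi\in\dZ_n^d}\left(\exp\!\bigl(\tfrac{s_n g_n(\xi)}{g_n(0)+g_n(\xi)}\bigr)-1-\tfrac{s_n g_n(\xi)}{g_n(0)+g_n(\xi)}\right) + o(h_d(n)).
\]
This is the torus analogue of~\eqref{eq:nud} with $g_n$ in place of $G$; the cubic correction term in~\eqref{eq:nud} can be absorbed into the remainder because, as I check below, it only contributes $O(\sum_\xi|g_n(\xi)|^3)$, which is $o(h_d(n))$.

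Writing $y_\xi := s_n g_n(\xi)/(g_n(0)+g_n(\xi))$ and Taylor expanding $\exp(y_\xi)-1-y_\xi = y_\xi^2/2+O(y_\xi^3)$ gives
\[
\sum_\xi\bigl(\exp(y_\xi)-1-y_\xi\bigr) = \frac{s_n^2}{2g_n(0)^2}\sum_\xi g_n(\xi)^2 + O\!\Bigl(\sum_\xi|g_n(\xi)|^3\Bigr) + O(1).
\]
The Green's function decay from Lemma~\ref{lem:gnorder} gives $g_n(\xi) \lesssim \|\xi\|^{-(d-2)}$, which forces $\sum_\xi|g_n(\xi)|^3 = O(\log n)$ for $d=3$ and $O(1)$ for $d=4$, both $o(h_d(n))$; the $O(1)$ term absorbs contributions from the finitely many $\xi$ at which $g_n(\xi)/g_n(0)$ is not small. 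Using $g_n(0)\to G(0)$ and $s_n\to s$ then reduces the claim to
\[
\frac{\sigma_{n,\ell}^2(t)}{n^d h_d(n)} = \frac{s^2 e^{-s}}{2G(0)^2}\cdot\frac{1}{h_d(n)}\sum_{\xi\in\dZ_n^d}g_n(\xi)^2 + o(1).
\]

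The final step is to evaluate $\lim_n h_d(n)^{-1}\sum_\xi g_n(\xi)^2$ by Fourier inversion on the torus. For the $\tfrac12$-lazy walk,
\[
g_n(\xi) = \frac{d}{n^d}\sum_{k\in\dZ_n^d\setminus\{0\}}\frac{e^{2\pi i k\cdot\xi/n}}{\sum_{j=1}^d\sin^2(\pi k_j/n)},
\]
and Parseval gives
\[
\sum_{\xi\in\dZ_n^d}g_n(\xi)^2 = \frac{d^2}{n^d}\sum_{k\ne 0}\frac{1}{\bigl(\sum_j\sin^2(\pi k_j/n)\bigr)^2}.
\]
Restricting to $\|k\|\le n/4$ and using $\sin(\pi k_j/n)=\pi k_j/n+O(n^{-3})$ produces the dominant contribution $\frac{d^2 n^{4-d}}{\pi^4}\sum_{0<\|k\|\le n/4}\|k\|^{-4}$, while the range $\|k\|>n/4$ contributes only $O(n^{d-4})$, negligible against $h_d(n)$. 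For $d=3$ this gives $n^{4-d}=n$ and $\sum_{k\in\dZ^3\setminus\{0\}}\|k\|^{-4}<\infty$, yielding $\alpha_3 = 9/(\pi^4 G(0;\dZ^3)^2)\sum_v\|v\|^{-4}$; for $d=4$ it gives $n^{4-d}=1$ and $\sum_{0<\|k\|\le n/4}\|k\|^{-4}\sim\log n$, yielding the stated $\alpha_4$. The main obstacle is the first step---producing the variance identity with error controlled at $o(h_d(n))$---because the hitting-time tail estimate must be refined enough to separate the $h_d(n)$-scale contribution from genuine lower-order remainders; once that identity is in place, the Taylor expansion and the Parseval computation proceed in a standard manner.
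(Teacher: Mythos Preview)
Your overall strategy is correct and aligns with the paper's: reduce to the two-point hitting probability via~\eqref{eq:var1}, apply Lemma~\ref{lem:tailprob}, Taylor expand, and finish with the spectral/Parseval identity for $\sum_\xi g_n(\xi)^2$. The Taylor expansion and the Parseval step are essentially right (two minor corrections: the large-$k$ contribution in the Parseval sum is $O(1)$, not $O(n^{d-4})$, for $d=3$, but this is still $o(h_3(n))$; and the pointwise bound $g_n(\xi)\lesssim\|\xi\|^{-(d-2)}$ is Lemma~\ref{lem:gndecay}, not Lemma~\ref{lem:gnorder}).

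The genuine gap is exactly the step you flag as the ``main obstacle,'' but your diagnosis of it is off. The issue is not that Lemma~\ref{lem:tailprob} needs further refinement---it is already sharp enough. The problem is that the correction terms in Lemma~\ref{lem:tailprob} involve $f_n'(\xi)=\tfrac12(g_n'(0)+g_n'(\xi))$, and $g_n'(0)\simeq h_d(n)$ for $d\in\{3,4\}$. Hence the $f_n'$-corrections, summed over $\xi$, contribute at order $h_d(n)$ and \emph{cannot} be absorbed into $o(h_d(n))$. Your displayed identity
\[
\frac{\sigma_{n,\ell}^2(t)}{n^d}=e^{-s_n}\sum_\xi\bigl(e^{y_\xi}-1-y_\xi\bigr)+o(h_d(n))
\]
is in fact true, but only because the net $f_n'$-correction (coming from both the $\pr(\tau(0,\xi)>t)^\ell$ and $\pr(\tau(0)>t)^{2\ell}$ sides) cancels exactly against the linear piece $e^{-s_n}\sum_\xi y_\xi$---this is the finite-$n$ manifestation of the identity $g_n'(0)+g_n(0)=\sum_\xi g_n(\xi)^2$ that produces the form~\eqref{eq:nud}. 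That cancellation has to be \emph{proved}; it does not follow just by invoking the shape of~\eqref{eq:nud}, since the proof of Theorem~\ref{thm:var5} passes to the limit term-by-term via dominated convergence, which is unavailable here. The paper does not assert your identity directly: it keeps the $f_n'$-corrections explicit, shows that $n^{-d}\sum_\xi\psi(f_n(\xi))\,f_n'(\xi)/g_n'(0)\to\tfrac12\psi(G(0)/2)$ for the relevant Lipschitz $\psi$ (using $0\le f_n'(\xi)\le g_n'(0)$ and the uniform decay of $g_n(\xi)$ from Lemma~\ref{lem:gndecay}), computes three limits separately, and observes the cancellation only at the end. You will need to carry out an equivalent computation.
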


One can quickly check that the limit in $\ga_4$ exists and is finite. Note that this constant also appears in the fluctuation behavior for competing random walks in~\cite{Miller2013a}.

\begin{rem}
	One can extend the fluctuation behavior of the vacant set from discrete-time random walks to continuous-time random walks. Let $(T(s))_{s\ge 0}$ be a Poisson process with intensity 1, independent of $X_t$. Let $Y_s = X_{T(s)}$ and $\wt{V}_n^{(\ell)}(t)$ be the vacant set of $Y_s$ up to time $t$. Then, the variance of $\wt{V}_n^{(\ell)}(t)$ can be computed via conditioning with Lemma~\ref{lem:mean} and Theorems~\ref{thm:var5},~\ref{thm:var34}. Indeed, it follows that
	\begin{align*}
		\var(\wt{V}_n^{(\ell)}(t))
		= \E(\var(\wt{V}_n^{(\ell)}(t)\mid T(t)=s )) + \var(\E(\wt{V}_n^{(\ell)}(t)\mid T(t)=s) ),
	\end{align*}
	for $t\approx un^d$. Then, we apply the fluctuation results for $V_n^{(\ell)}(s)$.
\end{rem}

\begin{rem}
	We remark that the order of the variance $\gs_{n,\ell}^2(t)$ at $t\approx un^d$ is the same as that of the variance for competing random walks in~\cite{Miller2013a}. Unlike our proof, which relies on an elementary and analytic approach using the generating function of hitting probabilities, Miller~\cite{Miller2013a} made use of the conditioning argument at the mixing time to obtain the cancellation in the expansion of the variance, which leads to the precise asymptotic for the variance. It would be interesting to get a probabilistic proof and interpretation of our results on the asymptotic of the variance $\gs_{n,\ell}^2(t)$.
\end{rem}

We can generalize the above result to the size of the intersection of ranges and similar sets in the following way. For any subset $I\subseteq [\ell]$, we define
\begin{align*}
	R_{n,\ell}^I(t)
	:=\sum_{v\in\dZ^d_n} \prod_{i\in I}\vone_{\{\gt_i(v)\le t\}}
	\prod_{j\in I^c}\vone_{\{\gt_j(v)> t\}}
\end{align*}
as the number of vertices in $\dZ_{n}^{d}$ that are visited by walks indexed by $I$ but not by walks indexed by $[\ell]\setminus I$ at time $t$. We define the random vector indexed by $I\subseteq[\ell]$
$$
	\mvR_{n,\ell}(t):=(R_{n,\ell}^I(t))_{I\subseteq [\ell]}.
$$
Note that $R_{n,\ell}^\emptyset(t)=V_n^{(\ell)}(t)$ and $R_{n}^{[\ell]}(t)= R_n^{(\ell)}(t)$ as defined in~\eqref{def:v} and \eqref{def:r}. We can compute the variance-covariance structure for the random vector $\mvR_{n,\ell}(t)$ when $t\approx u n^{d}$.

\begin{thm}\label{thm:rcov}
	For $I,J\subseteq [\ell]$ and $t/n^d\to u\in(0,\infty)$, we have
	\begin{align*}
		&\lim_{n\to\infty}\frac1{n^d h_d(n)}\cov(R_{n,\ell}^I(t), R_{n,\ell}^J(t))\\
		&\qquad	=\sum_{m=0}^{\abs{I\cup J}}\theta_{k,r,m}(\exp(-u/G(0)) )\cdot \nu_d(2(\ell-m)u/G(0))
	\end{align*}
	where $k=\abs{I\cap J}, r=\abs{I\Delta J}$, $\nu_d(\cdot)$ is as given by~\eqref{eq:nud}\ndash\eqref{eq:nud34}, and
	\begin{align*}
		\theta_{k,r,m}(a)
		:=\sum_{j=(m-k)_+}^{r\wedge m} \binom{k}{m-j} \binom{r}{j}(1-2a)^{m-j} (-1)^{r-j}a^{j}.
	\end{align*}
\end{thm}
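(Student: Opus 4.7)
The plan is to reduce $\cov(R_{n,\ell}^I,R_{n,\ell}^J)$ to variances of the vacant-set variables $V_A(t):=\sum_{v\in\dZ_n^d}\prod_{i\in A}\ind_{\{\tau_i(v)>t\}}$ for $A\subseteq[\ell]$, and then apply Theorems~\ref{thm:var5} and~\ref{thm:var34}. Writing $\ind_{\{\tau_i(v)\le t\}}=1-\ind_{\{\tau_i(v)>t\}}$ and expanding the product over $I$ by inclusion--exclusion gives
\begin{equation*}
R_{n,\ell}^I(t)=\sum_{S\subseteq I}(-1)^{|S|}V_{S\cup I^c}(t),
\end{equation*}
and consequently
\begin{equation*}
\cov(R_{n,\ell}^I,R_{n,\ell}^J)=\sum_{S\subseteq I,\,T\subseteq J}(-1)^{|S|+|T|}\cov(V_{S\cup I^c}(t),V_{T\cup J^c}(t)).
\end{equation*}

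A direct calculation using independence of the $\ell$ walks together with translation invariance of $p:=\pr_\pi(\tau_1(0)>t)$ on $\dZ_n^d$ yields, for any $A,B\subseteq[\ell]$,
\begin{equation*}
\cov(V_A(t),V_B(t))=p^{|A\Delta B|}\,\var(V_n^{(|A\cap B|)}(t));
\end{equation*}
the walks in $A\Delta B$ are independent of those in $A\cap B$ and factor out through their marginal probabilities, while only the common walks contribute genuine covariance.

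For the combinatorial step I decompose $[\ell]=P_1\sqcup P_2\sqcup P_3\sqcup P_4$ with $P_1=I\cap J,\,P_2=I\setminus J,\,P_3=J\setminus I,\,P_4=(I\cup J)^c$ of sizes $k,a,b,c$ (so $r=a+b$ and $c=\ell-k-r$), and split $S=S_1\cup S_2$ with $S_i\subseteq P_i$, $T=T_1\cup T_3$ with $T_1\subseteq P_1,\,T_3\subseteq P_3$. A short computation gives $|A\cap B|=|S_1\cap T_1|+|S_2|+|T_3|+c$ and $|A\Delta B|=|S_1\Delta T_1|+(a-|S_2|)+(b-|T_3|)$. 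Summing element by element over the four blocks, the combined sum factors through the generating function (with $y$ marking the index of $\var(V_n^{(\cdot)}(t))$)
\begin{equation*}
G(y)=y^c(y+1-2p)^k(p-y)^r,
\end{equation*}
where $(y+1-2p)^k$ comes from the four per-element choices on $P_1$ (weights $y,-p,-p,1$), each factor $(p-y)$ on $P_2$ and $P_3$ arises from the two per-element choices (weights $-y$ versus $p$, using $p^{a-|S_2|}$ and $p^{b-|T_3|}$), and $y^c$ from $P_4$. Setting $m_0=|A\cap B|=\ell-m$ and extracting $[y^{\ell-m}]G(y)$, the reindexing $j\mapsto r-j$ together with $\binom{k}{k-m+j}=\binom{k}{m-j}$ identifies the coefficient with $\theta_{k,r,m}(p)$, producing
\begin{equation*}
\cov(R_{n,\ell}^I(t),R_{n,\ell}^J(t))=\sum_{m=0}^{k+r}\theta_{k,r,m}(p)\,\var(V_n^{(\ell-m)}(t)).
\end{equation*}

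Finally Theorems~\ref{thm:var5} and~\ref{thm:var34} supply $\var(V_n^{(\ell-m)}(t))/(n^dh_d(n))\to\nu_d(2(\ell-m)u/G(0))$, and the standard asymptotic $p\to e^{-u/G(0)}$ used before Lemma~\ref{lem:mean} gives $\theta_{k,r,m}(p)\to\theta_{k,r,m}(e^{-u/G(0)})$; since the outer sum has $O(1)$ terms (depending only on $\ell$), the limit passes through termwise to yield the claim. The main obstacle is the combinatorial bookkeeping --- identifying the correct four-fold factorization and matching the resulting generating-function expansion with the sign and binomial conventions of $\theta_{k,r,m}$ via the reparametrization $j\mapsto r-j$; the analytic content is inherited entirely from the preceding variance theorems.
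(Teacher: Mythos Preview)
Your argument is correct and reaches exactly the same finite-$n$ identity
\[
\cov(R_{n,\ell}^I(t),R_{n,\ell}^J(t))=\sum_{m=0}^{k+r}\theta_{k,r,m}(p)\,\gs_{n,\ell-m}^2(t)
\]
that the paper obtains, after which both proofs invoke Theorems~\ref{thm:var5} and~\ref{thm:var34} identically. The routes to that identity, however, are genuinely different. The paper works directly with the joint moment: for each pair $(v,w)$ it factors $\E[\prod_i\cdots]$ over the $\ell$ independent walks, observes that the four blocks $I\cap J,\,I\Delta J,\,I^c\cap J^c$ contribute the probabilities $1-2a+b,\,a-b,\,b$ respectively (with $a=\pr(\tau(0)>t)$, $b=\pr(\tau(0,\xi)>t)$), and then binomially expands $(1-2a+b)^k(a-b)^r b^{k'}$ in powers of $b$. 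You instead pass through the inclusion--exclusion representation $R_{n,\ell}^I=\sum_{S\subseteq I}(-1)^{|S|}V_{S\cup I^c}$, use the clean structural identity $\cov(V_A,V_B)=p^{|A\Delta B|}\gs_{n,|A\cap B|}^2$, and then read off the coefficients via the generating polynomial $y^{c}(y+1-2p)^k(p-y)^r$. The paper's path is shorter and needs no reindexing; your path isolates the conceptual content (only the walks in $A\cap B$ create covariance) and would adapt more transparently to other functionals built from the indicators $\ind_{\{\tau_i(v)>t\}}$. Both are equally valid.
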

\begin{rem}
	Under suitable assumptions, our analysis could be applied to general vertex-transitive graphs with $g_n(0)$ bounded above. In particular, we consider the vacant set of an $\eps$\ndash lazy random walk on the hyper-cube $\dZ_2^n$. Since we have explicit formulas for the Green's function, the eigenvalues, and the eigenfunctions as in~\cite{ChungYau200a}*{Section 7}, we can apply our method to obtain the fluctuation behavior of the vacant set as $n\to\infty$. Suppose $\xi\in\dZ^n_2$ belongs to the $k$-th level for $0\le k\le n$, then the Green's function is given by
	\begin{align*}
		g_n(\xi)
		&= \frac{1}{(1-\eps)2^n}\sum_{v\in \dZ^n_2, v\neq 0}\frac{n}{\norm{v}_1}\cdot (-1)^{\langle v,\xi\rangle}\\
		& = \frac{1}{(1-\eps)2^n}\left(\sum_{i=1}^n \frac{n}{i}\binom{n}{i} - 2\sum_{i=1}^{k}\frac{1}{\binom{n-1}{i-1}}\sum_{j=i}^n\binom{n}{j}\right)
	\end{align*}
	see~\cite{Beveridge2016a}*{Section 4} and~\cite{ChungYau200a}*{Example 4}. In particular, $g_n(0)\to 2/(1-\eps)$ as $n\to\infty$. Similarly, using the spectral representation one can compute that $g'_n(0)\to 4/(1-\eps)^2-2/(1-\eps)$. Thus, $\sum_{\xi\in\dZ_2^n}g_n(\xi)^2 - g_n(0)^2\to 0$. Then, when $t/2^n\to u$, the variance of the size of the vacant set $V_n^{(\ell)}(t)$ is of order $2^n$ and
	\[
		2^{-n}\var(V_n^{(\ell)}(t))\to \nu((1-\eps)\ell u)
		\text{ with } \nu(x)=
		e^{-x}(e^{x/2} -1- (1-\eps)x/4).
	\]
	One might need to verify the detail somewhere else for other vertex-transitive graphs such as the Cayley graph of the symmetric group $S_n$.
\end{rem}

The first step in the proof of Theorem~\ref{thm:var5} and~\ref{thm:var34} is the following simplification, which follows from the transitivity of the graph and independence of the random walks:
\begin{align}\label{eq:var1}
	\var( V_{n}^{(\ell)}(t) ) & = n^d \sum_{\xi \in \dZ_{n}^{d}} \left(\pr(\tau(0,\xi)>t)^{\ell} -\pr(\tau(0)>t)^{2\ell}\right)
\end{align}
where
\begin{align*}
	\tau(0,\xi):= \inf\{t\ge 0: X_t\in\{0,\xi\}\}
\end{align*}
is the hitting time of the set $\{0,\xi\}$ for the random walk $X_t$. In particular, we need a precise estimate of the tail behavior for the hitting time.

\begin{lem}\label{lem:tailprob}
	For $d\ge 3$, $\xi\in\dZ^d_n$, and $u=(t+1)/n^d\in (0,\infty)$, we have
	\begin{align*}
		\pr_\pi(\gt(0,\xi)>t)
		&= e^{- \frac{u}{f_n(\xi)}}\left( 1+\frac{ u }{n^{d}}\left(\frac{f'_n(\xi)}{f_n(\xi)^3}+ \frac{1}{2f_n(\xi)^2} \right) -\frac{f_n'(\xi)}{n^d f_n(\xi)^2}\right)\\ 
		&\qquad \qquad+O(n^{-2d+3})
	\end{align*}
	where
	\begin{align}\label{def:fn}
		f_n(\xi) :=\frac12(g_n(0)+g_n(\xi)),
		\quad
		f_n'(\xi):=\frac12(g_n'(0)+g_n'(\xi)).
	\end{align}
\end{lem}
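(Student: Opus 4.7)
The plan is to compute the generating function $\Phi_\xi(z):=\sum_{t\ge 0} z^t\pr_\pi(\tau(0,\xi)>t)$ in closed form via a first-passage decomposition, then read off the large-$t$ asymptotics by singularity analysis at the dominant pole. Throughout, let $G_z(x,y):=\sum_{t\ge 0}z^t\pr_x(X_t=y)$ be the Green's generating function and set $a_z:=G_z(0,0)=G_z(\xi,\xi)$ and $b_z:=G_z(0,\xi)=G_z(\xi,0)$; these equalities follow from translation invariance of the $\frac12$\ndash lazy walk on $\dZ_n^d$.

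Applying the strong Markov property at $\tau(0,\xi)$ and adding the resulting identities for $y=0$ and $y=\xi$ gives
\[
\E_x\bigl[z^{\tau(0,\xi)}\bigr]=\frac{G_z(x,0)+G_z(x,\xi)}{a_z+b_z}\qquad\text{for every }x\in\dZ_n^d.
\]
Averaging over $x$ under $\pi$ and using the stationarity identity $\sum_x\pi(x)G_z(x,y)=\pi(y)/(1-z)$ yields
\[
\Phi_\xi(z)=\frac{1-\E_\pi[z^{\tau(0,\xi)}]}{1-z}=\frac{n^d R(z)}{2+n^d(1-z)R(z)},
\]
where $R(z):=\sum_{t\ge 0}z^t\bigl(P^t(0,0)+P^t(0,\xi)-2/n^d\bigr)$ is analytic in $|z|<1/\lambda_1$, with $\lambda_1$ the second eigenvalue of $P$, satisfying $1-\lambda_1\asymp n^{-2}$.

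Next, $R(1)=2f_n(\xi)$ and $R'(1)=2f_n'(\xi)$ by definition, so a Taylor expansion to second order gives
\[
R(z)=2f_n(\xi)-2(1-z)f_n'(\xi)+(1-z)^2\rho_n(z,\xi),
\]
with $|\rho_n(z,\xi)|=O(n^{(6-d)_+})$ on $|1-z|\le c/n^2$; the bound comes from the Fourier expansion $P^t(0,\eta)-1/n^d=\tfrac1{n^d}\sum_{v\ne 0}\lambda_v^t e^{2\pi iv\cdot\eta/n}$ and the eigenvalue asymptotic $1-\lambda_v\asymp\|v\|^2/n^2$. Iteratively solving $D(z^*)=0$ for $D(z):=2+n^d(1-z)R(z)$ in the variable $1-z$ yields
\[
1-z^*=-\frac{1}{n^d f_n(\xi)}+\frac{f_n'(\xi)}{n^{2d} f_n(\xi)^3}+O\!\bigl(n^{-3d+(6-d)_+}\bigr).
\]
Deforming the contour of $[z^t]\Phi_\xi(z)=\frac{1}{2\pi i}\oint \Phi_\xi(z)z^{-t-1}\,dz$ outward to $|z|=1+c/(2n^2)<1/\lambda_1$ picks up only the pole at $z^*$, the integral over the outer circle being $O(e^{-c'n^{d-2}})$, super-exponentially small. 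The residue calculation then yields
\[
-\frac{n^dR(z^*)}{D'(z^*)}=1-\frac{f_n'(\xi)}{n^df_n(\xi)^2}+O(n^{-2d+3}),
\]
while expanding $\log z^*$ and multiplying by $-(t+1)=-un^d$ gives
\[
(z^*)^{-t-1}=e^{-u/f_n(\xi)}\!\left(1+\frac{u}{n^d}\!\left(\frac{f_n'(\xi)}{f_n(\xi)^3}+\frac{1}{2f_n(\xi)^2}\right)+O(n^{-2d+3})\right);
\]
multiplying the prefactor and the exponential term produces the formula claimed in the lemma.

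The main obstacle is tracking the uniform error $O(n^{-2d+3})$ through every step; this is saturated at $d=3$. Indeed, $\sum_{v\ne 0}(1-\lambda_v)^{-3}\asymp n^6$ in dimension three, so $\rho_n=O(n^3)$ and $(1-z^*)^2\rho_n$ is of order $n^{-3}=n^{-2d+3}$, exactly at the allowed scale. For $d\ge 4$ the corresponding bounds are comfortable. Once the uniform bound on $\rho_n$ is in hand, the rest is bookkeeping: propagating the errors through the iterative solution for $z^*$ and through the Taylor expansion of $(z^*)^{-t-1}$, retaining every term of size at least $n^{-2d+3}$ and absorbing smaller contributions into the error.
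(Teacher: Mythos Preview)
Your proof is correct and follows essentially the same route as the paper: derive the generating function $\Phi_\xi(z)=f_n(\xi;z)/(n^{-d}+(1-z)f_n(\xi;z))$, locate the dominant pole $z^*=1+n^{-d}/f_n(\xi)+\cdots$, Taylor-expand $f_n(\xi;\cdot)$ around $1$ using the bound $f_n''=O(n^3)$ from $\sum_{v\ne 0}\lambda_v^{-3}$, and read off the coefficient. The only cosmetic difference is that the paper exploits the rational structure of $\Phi_\xi$ to write an exact partial-fraction expansion (its Lemmas on roots and decomposition), which immediately gives the interlacing $1<\gamma_1<\zeta_1<\gamma_2<\cdots$ and the bound $|e(t)|\le \zeta_1^{-t}$, whereas you obtain the same conclusions via contour deformation; your argument would be airtight if you add one line (e.g.\ Rouch\'e or monotonicity of $z\mapsto (1-z)f_n(\xi;z)$ on $(1,\zeta_1)$) confirming that $z^*$ is the \emph{unique} pole inside $|z|\le 1+c/(2n^2)$.
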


\begin{rem}
	It is easy to check that $0\le f'_n(\xi)\le f'_n(0)$ for all $\xi$ using the spectral representation~\eqref{eq:gnspec}. Moreover, $n^{-d}\sum_\xi f'_n(\xi)=\frac12f'_n(0).$
\end{rem}

\subsection{Related Literature}

We first review the relevant literature on the range of simple random walks on the square lattice $\dZ^d$. Dvoretzky and Erd\"os~\cite{DE51} proved the strong law of large numbers for the range on $\dZ^d$ for $d\ge 2$. A central limit theorem was obtained by Jain and Orey~\cite{JO68}. They showed that for strongly transient random walks, the variance of the range up to time $t$ is of order $t$, and the range with a suitable normalization converges to normal distribution. Note that a simple random walk on $\dZ^d$ is strongly transient if and only if $d\ge 5$. Later, Jain and Pruitt~\cite{JP70clt} considered the general transient case. It was shown in~\cite{JP70clt} that the variance of the range up to time $t$ is of order $t$ if $d=4$, and $t\log t$ if $d=3$, and that the central limit theorem holds for $d=3, 4$. The recurrent case ($d=2$) was studied by Le Gall~\cite{LeGall}, who proved that if $\cR_t$ is the range up to time $t$, then $(\cR_t-\E(\cR_t))/(t(\log t)^{-2})$ converges to the intersection local time of a planar Brownian motion, which is a non-Gaussian distribution.

There have been efforts to study the capacity of the range of random walks on $\dZ^d$. For a finite set $A\subset \dZ^d$ in $d\ge 3$, the capacity of $A$ is defined by the probability that a simple random walk starting from $A$ never returns to $A$. Jain and Orey~\cite{JO68} showed the law of large numbers for the capacity of the range of a simple random walk when $d\ge 5$. Later, Chang~\cite{Chang17} extended the result for $d=3,4$. Furthermore, it was shown in~\cite{Chang17} that when $d=3$, the capacity of the range converges to that of Brownian motion. Recently, Asselar, Schapira, and Sousi~\cites{ASS18zd, ASS19z4} derived the central limit theorems for the capacity of the range when $d\ge 6$ and $d=4$.

Sznitman~\cite{Sznitman2010} introduced the random interlacement model to study the trace left by a simple random walk in a discrete torus $\dZ^d_n$ for a time $n^d$. The random interlacement at level $u$, denoted by $\cI^u$, can be constructed via a Poisson point process on the set of doubly infinite nearest neighbor paths on $\dZ^d$ with intensity measure given in terms of the Newtonian capacity. The model provides the local picture of the trace left by a simple random walk in a discrete torus.

One of the applications of the random interlacement model is to investigate the percolative properties of the vacant set on $\dZ^d_n$ up to time proportional to the size of the discrete torus. It was shown in~\cite{Sznitman2010} that there exists a critical $u_\ast$ such that the vacant set does not percolate for large $u>u_\ast$ when $d\ge 3$, and percolates for small $u<u_\ast$ when $d\ge 7$. Later, Teixeira and Windisch~\cite{TW2011} showed that if $u>0$ is large enough, then the volumes of all the components of the vacant set are of order $(\log n)^{\gl(u)}$ and if $u>0$ is small enough then there exists a macroscopic component. They also proved that if $d\ge 5$, the macroscopic component is unique in the small $u$ regime. The proofs of these results are based on couplings of a simple random walk on $\dZ^d$ with the random interlacement.

Unlike the percolation of the vacant set, the fluctuation of the size of the vacant set does not have any threshold at the time level $u$. This is because the local behavior of the random walk is crucial in the study of the percolative property. At the same time, the variance of the size of the vacant set depends on both global and local pictures.

The random interlacement captures the local behavior of a simple random walk on the discrete torus, in a sense that one can be approximated by the other as $n\to\infty$ in a box of size smaller order than the size of the torus. Consider the vacant set left by the random interlacement $\cI^u$ in a box $[-n/2,n/2)^d$, say $W^u_n := [-n/2,n/2)^d \setminus \cI^u$. Since $\pr(\{x,y\}\subset W^u_n)=\exp(-2u/(G_\ast(0)+G_\ast(x-y)))$ where $G_\ast(x)$ is the Green's function for $\dZ^d$, the variance of the vacant set $W^u_n$ can be computed as
\begin{align*}
	\var(W^u_n) = n^d \sum_{\xi\in [-\frac{n}{2},\frac{n}{2})^d} \left( e^{-\frac{2u}{G_\ast(0)+G_\ast(\xi)}}-e^{-\frac{2u}{G_\ast(0)}} \right)+o(1).
\end{align*}
Note that the sum in the right-hand side does not converge as $n\to\infty$ for $d\ge 3$, while the sum in~\eqref{eq:nud} does.
Compared to Theorem~\ref{thm:var5}, one can see that the contribution from the global fluctuation matters in the variance computation of the vacant set $V^{(l)}_n(t)$.

The vacant set $\cV_n(t)$ at a time of order $n^d\log n^d$ was studied in~\cites{Al91, Bel, MS17}. The cover time $\tau_{\mathrm{cov}}$ is the maximum of the hitting times $\gt(\xi)$ over $\xi\in\dZ^d_n$. Let $t_{\mathrm{cov}}=\max_x\E_x[\gt_{\mathrm{cov}}]$. It is well-known that $t_{\mathrm{cov}}= C_d n^d\log n^d(1+o(1))$ as $n\to\infty$. Belius~\cite{Bel} proved that the fluctuations of $\gt_{\mathrm{cov}}$ are governed by the Gumbel distribution, in a sense that $\gt_{\mathrm{cov}}/(C_d n^d)-\log n^d$ converges to Gumbel in law as $n\to\infty$ for $d\ge 3$. He also showed that the scaling limit of the vacant set up to time $C_d n^d\log n^d$ as a set-valued process in $(\dR/\dZ)^d$ is indeed a Poisson point process. Heuristically, the Gumbel fluctuation of the cover time implies that, at a time of order $n^d\log n^d$, the hitting times $\gt(\xi)$, $\xi\in\dZ^d_n$, are approximately almost exponential and independent. A natural question is the limiting behavior of the vacant set $\cV_n(t)$ at a time $t=\alpha t_{\mathrm{cov}}(1+o(1))$ for $\alpha>0$. Miller and Sousi~\cite{MS17} showed that there are two thresholds $\alpha_0(d)<\alpha_1(d)$ such that $\cV_n(t)$ is approximately Bernoulli random variable indexed by $\dZ^d_n$ if $\alpha>\alpha_1(d)$, and the total variation distance between $\cV_n(t)$ and Bernoulli random variable is 1 if $\alpha<\alpha_0(d)$.

Miller~\cite{Miller2013a} investigated the fluctuation behavior of the trace by competing random walks. Consider $\ell$ independent simple random walks $(X_{t,i})$, $i\in[\ell]$, on $\dZ^d_n$. We assume that each site $\xi$ is painted by $i$ irreversibly at time $t$ if $X_{t,i}=\xi$. Let $\cA_i(t)$ be the set of sites painted by $i$ up to time $t$. Miller~\cite{Miller2013a} computed the limiting behavior of the variance of $\abs{\cA_i(\infty)}$ for $d\ge 3$ and $\ell=2$.
Indeed, it was shown that the order of the variance of $\abs{\cA_i(\infty)}$ has the same order as the sum of squares of the $\dZ^d$ Green's function, which coincides with our fluctuation behavior. He also extended the result to vertex-transitive graphs with some assumptions on the mixing time and provided precise limits of the variances for the hypercube and the Cayley graph of $S_n$ as a corollary.

\subsection{Roadmap}

The article is structured as follows. In Section~\ref{sec:pre} we provide notations, background details, and preliminary results for the later analysis. Section~\ref{sec:series} contains the proof for accurately computing coefficients from specific functions of a power series, which will play a crucial role in estimating the upper tail behavior for the hitting time of two-point sets. We provide proof of auxiliary results in Section~\ref{sec:proofA} and proof of the main theorems in Section~\ref{sec:proofM}. Finally, we conclude with a discussion and list of open problems in Section~\ref{sec:discuss}.

\section{Preliminaries}\label{sec:pre}

\subsection{Notations and Conventions}

For the rest of the article we use $X_t$ and $S_t$ for $\frac12$\ndash lazy random walks on $\dZ^d_n$ and $\dZ^d$, respectively. We explicitly write the dependence on $n,d$ when needed. For a set $A\subseteq \dZ_{n}^{d}$, the hitting time for $A$ is denoted by
$$
	\tau(A):=\inf\{t\ge 0: X_t\in A\}.
$$
We will add extra subscript $i$ when working with the $i$\ndash th lazy random walk. If $A=\{x\}$ or $A=\{x,y\}$, we simply write $\tau(x)$ and $\tau(x,y)$, instead of $\tau(A)$. Let $\pr$ and $\E$ be the probability and the expectation of $X_t$ starting from the uniform distribution $\pi_n$ on $\dZ_{n}^{d}$. We will use $\pr_{\xi}$ and $\E_{\xi}$ to denote the probability and the expectation of $X_t$ starting from $X_{0}=\xi$.
For $v=(v_1,v_2,\cdots, v_d)$ in $\dZ^d_n$ and $p \in[1,\infty)$, we use the notation
\[
	\norm{v}_p := \bigl( |v_1|^{p}\wedge(n-|v_1|)^p +|v_2|^{p}\wedge(n-|v_2|)^p +\cdots +|v_d|^{p}\wedge(n-|v_d|)^p\bigr)^{1/p}.
\]
For simplicity, we drop the subscript $p$ when $p=2$, that is, $\norm{v}=\norm{v}_2$.

We will write $a_n\lesssim b_n$, when there exists a finite positive constant $c$ such that $a_n\le c b_n$ for all $n$. Similarly, we will use $a_n\simeq b_n$, when $a_n \lesssim b_n$ and $b_n\lesssim a_n$. We will also use $a_n=O(b_n)$ or $b_n=\gO(a_n)$ for $a_n\lesssim b_n$ and $a_n=\Theta(b_n)$ for $a_n\simeq b_n$.

\subsection{Green's Function}

We recall the Green's functions and their basic properties on $\dZ^d$ and $\dZ^d_n$. For further detail, we refer~\cite{ChungYau200a}*{Section 7} for the discrete torus, and~\cites{Lawler, LawlerLimic} for the lattice.
The Green's function on $\dZ^d_n$ for a $\frac12$\ndash lazy simple random walk is defined by
\begin{align}\label{eq:gndef}
	g_n(\xi,\eta) = \sum_{t=0}^\infty (\pr_\xi(X_t=\eta)-\pi(\eta)).
\end{align}
For simplicity, we use the notation $g_n(\xi):= g_n(0,\xi)$ and drop the subscript $n$ when there is no ambiguity. For a $\frac12$\ndash lazy simple random walk $S_t$ on $\dZ^d$, the Green's function $G(\xi,\eta)$ is the expected number of visit to $\eta$ from $\xi$,
\begin{align*}
	G(\xi,\eta) = \E_{\xi}\left(\sum_{t=0}^\infty \vone_{\{X_t=\eta\}}\right) = \sum_{t=0}^\infty \pr_\xi(X_t=\eta).
\end{align*}
Let $G(\xi):=G(0,\xi)$.

The Green's function has the following spectral representation. The Laplacian matrix for the random walk is given by $\gD_{n}=I-P_{n}=\frac12(I-\frac{1}{2d}A_n)$ where $A_n$ is the adjacency matrix of $\dZ^d_n$, that is, $A_n(\xi,\eta)=1$ if $\xi$ is a neighbor of $\eta$ for $\xi,\eta\in\dZ^d_n$, and otherwise 0. Let $e_n(x):=\exp(2\pi i x/n)$, for $x\in \dR$. It is well-known that
\begin{align}
	\gf_v(\xi)=n^{-\frac{d}{2}}e_{n}(\la v,\xi\ra),\quad \xi\in \dZ^{d}_{n}\label{def:eigenfn}
\end{align}
for $v\in \dZ^{d}_{n}$ gives a complete set of orthonormal eigenfunctions for $\gD_n$ with the corresponding eigenvalues
\begin{align}
	\gl_v
	:=\frac12 \biggl( 1-\frac{1}{d}\sum_{j=1}^d \cos\bigl(2\pi v_j/n\bigr)  \biggr)
	=\frac{1}{d}\sum_{j=1}^d \sin^2(\pi v_j/n) \in [0,1].\label{def:eigen}
\end{align}
In particular, $\gD_{n}$ is diagonalizable and
\begin{align*}
	\gD_{n}(\xi,\eta) =\sum_{v\in\dZ_n^d} \gl_v \gf_v(\xi)\ol{\gf_v(\eta)}\quad \text{ for } \xi,\eta\in\dZ_{n}^{d}
\end{align*}
where $\bar{z}$ denotes the complex conjugate of $z\in\dC$.
Then, the Green's function can be written as
\begin{align}\label{eq:gnspec}
	g_n(\xi,\eta)
	= \sum_{v\in \dZ^{d}_{n}\setminus \{0\}} \frac{1}{\gl_v} \gf_v(\xi)\ol{\gf_v(\eta)}
	= n^{-d}\sum_{v\in \dZ^{d}_{n}\setminus \{0\}} \frac{1}{\gl_v}e_{n}(\la \xi-\eta, v\ra).
\end{align}
Using the spectral representation, one can see that the Green's function $G(\xi)$ for $\dZ^d$ ($d\ge 3$) is the limit of $g_n(\xi)$ as $n\to\infty$. Indeed, we have
\begin{align*}
	G(\xi)
	= \lim_{n\to\infty} g_n(\xi)
	= \int_{[0,1]^d}\frac{d}{\sum_{j=1}^d \sin^2(\pi x_j)}e^{2\pi i \xi\cdot x}\, dx \text{ for all } \xi\in\dZ^d.
\end{align*}
Let $\varphi_{d}(x):= \frac{d}{\sum_{j=1}^d \sin^2(\pi x_j)}$ for $x\in[0,1]^{d}$, then $G(\xi)$ is the Fourier transform of $\varphi$. By Plancherel's identity (\cite{Grafakos}*{Proposition 3.1.16}), if $d\ge 5$ then
\begin{align*}
	\int_{[0,1]^d}\varphi_{d}(x)^2\, dx
	=\sum_{\xi\in\dZ^d} G(\xi)^2.
\end{align*}
This fact will be used in the proof of the main results (see~\eqref{eq:sumgsquare}).

\subsection{Generating Function}

The Green's generating function for the torus is defined by
\begin{align}
	g_{n}(\xi,\eta; z)
	:= \sum_{t=0}^\infty \left( \pr_\xi(X_t=\eta) - \pi(\eta) \right)z^t
	\text{ for } z\in\dC, \abs{z}<1. \label{def:g_n}
\end{align}
For simplicity, we use $g_{n}(\xi;z)=g_{n}(0,\xi;z)$ and drop the subscript $n$ if there is no ambiguity. Note that the spectral representation provides
\begin{align}
	g_{n}(\xi,\eta; z)
	=\sum_{v\in \dZ^{d}_{n}\setminus \{0\}} \frac{1}{1-z\widehat{\gl}_v} \gf_v(\xi)\ol{\gf_v(\eta)}
	= n^{-d}\sum_{v\in \dZ^{d}_{n}\setminus \{0\}} \frac{e_{n}(\la \xi-\eta, v\ra)}{1-z\widehat{\gl}_v}
\end{align}
where $\widehat{\gl}_{v}:=1-\gl_{v}$. Also note that $g_{n}(\xi;z)$ is defined for $z\notin \{\widehat{\gl}_{v}^{-1} \mid v\in \dZ_n^d\setminus \{0\}\}$ and $g_n(\xi)=g_{n}(\xi;1)$. Moreover, the Green's generating function defined in~\eqref{def:g_n} satisfies
\begin{align*}
	\sum_{\xi\in\dZ_{n}^{d}} g_{n}(\xi;z)=0 \text{ for all } z.
\end{align*}

Let
\begin{align*}
	g_{n}'(\xi;1)
	:= \frac{d}{dz}g_{n}(\xi;z)\Big|_{z=1}
	&= \sum_{t=0}^\infty t\left( \pr_\xi(X_t=\eta) - \pi(\eta) \right)\\
	& = n^{-d} \sum_{v\in \dZ_n^d\setminus \{0\}} (1-\gl_v)\gl_v^{-2} e_{n}(\la \xi,v\ra).
\end{align*}
We simply denote by $g_n'(\xi) = g_n'(\xi;1)$. One can easily check that
\begin{align*}
	\sum_{\xi\in\dZ_n^d} g_{n}(\xi)^{2} = n^{-d} \sum_{v\in \dZ_n^d\setminus \{0\}} \gl_v^{-2} = g_n(0)+g_{n}'(0).
\end{align*}
For $\xi\in\dZ^d$ and $z\in\dC$ with $\abs{z}<1$, we define $G'(\xi;z)=\sum_{t=0}^\infty z^t\pr_0(S_t=\xi)$ and
\begin{align*}
	G'(\xi)=\frac{d}{dz}G(\xi;z)|_{z=1}=\sum_{t=0}^\infty t\pr_0(S_t=\xi)>0
\end{align*}
where $S_t$ is a $\frac12$\ndash lazy simple random walk on $\dZ^d$. Note that $\lim_{n\to\infty}g_n'(\xi)=G'(\xi)$ for each $\xi\in\dZ^d$ when $d\ge 5$. The Green's generating function has a probabilistic interpretation. Consider two independence $\frac12$\ndash lazy simple random walks on $\dZ^d$, $S_{t,1}$ and $S_{t,2}$ starting at $0$ and $\xi$ respectively. Then, $\E\abs{ \{(s,t)\mid S_{s,1}=S_{t,2}\} }$, expected size of the intersection of two random walk trajectories (counted with multiplicities), can be written as
\begin{align}\label{eq:Gsquaresum}
	\sum_{v\in\dZ^d} G(v)G(v-\xi) & = \sum_{t,s=0}^\infty\pr_{0,\xi}(S_{t,1}=S_{s,2})\notag              \\
	               & =\sum_{t,s=0}^\infty\sum_{v\in\dZ^d}\pr_{0}(S_{t,1}=v)\pr_{\xi}(S_{s,2}=v) \notag \\
	               & =\sum_{t,s=0}^\infty\pr_{0}(S_{t+s}=\xi)
	=\sum_{t=0}^\infty(t+1)\pr_{0}(S_{t}=\xi)
	=G(\xi)+G'(\xi).
\end{align}
In this paper, the main results are given in terms of the Green's function $G$ for a $\frac12$\ndash lazy random walk on $\dZ^d$. If needed, one can replace $G$ with the standard Green's function $G_\ast$ with minor modification. If $G_\eps(\xi)$ is the Green's function for the $\eps$\ndash lazy random walk on $\dZ^d$, $\eps\in(0,1)$, then one can see that
\begin{align*}
	G_\eps(\xi) = \frac{1}{1-\eps}G_\ast(\xi), \qquad
	G_\eps'(\xi) = \frac{\eps}{(1-\eps)^2}G_\ast(\xi)+\frac{1}{(1-\eps)^2}G_\ast'(\xi).
\end{align*}
In particular, we have $G(\xi)=2G_\ast(\xi)$ and $G'(\xi)=2G_\ast(\xi)+4G_\ast'(\xi)$.

The following Lemma~\ref{lem:gnorder} will be used to compute the growth rate of $g_{n}'(0)$ depending on $n$ and the dimension $d$ by taking $k=2$.
\begin{lem}\label{lem:gnorder}
	For $k\ge 1$, we have
	\begin{align*}
		\frac{1}{n^d}\sum_{v\in \dZ_n^d\setminus \{0\}} \gl_v^{-k}
		\simeq \begin{cases}
			1    & \text{ if } d>2k  \\
			\log n  & \text{ if } d=2k  \\
			n^{2k-d} & \text{ otherwise}.
		\end{cases}
	\end{align*}
	As a direct consequence, we have
	\begin{align*}
		\frac{d^k}{dz^k}g_n(0;z)\biggl|_{z=1}
		\simeq \begin{cases}
			1     & \text{ if } d>2k+2 \\
			\log n   & \text{ if } d=2k+2 \\
			n^{2k+2-d} & \text{ otherwise}.
		\end{cases}
	\end{align*}
\end{lem}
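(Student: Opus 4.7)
The plan is to reduce both claims to the sharp two-sided eigenvalue bound
\[
	c_d \|v\|^2/n^2 \le \gl_v \le C_d \|v\|^2/n^2 \qquad\text{for every } v\in\dZ_n^d,
\]
where $\|v\|$ is the torus Euclidean norm of Section~\ref{sec:pre}. This comparison follows directly from~\eqref{def:eigen}: setting $\tilde v_j := |v_j|\wedge(n-|v_j|)\in[0,n/2]$ one has $\sin^2(\pi v_j/n) = \sin^2(\pi \tilde v_j/n)$ and $\|v\|^2 = \sum_j \tilde v_j^2$, and the elementary inequality $2x \le \sin(\pi x)\le \pi x$ on $x\in[0,1/2]$ applied coordinatewise yields the claim after summation and division by $d$.

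Given the eigenvalue comparison, I rewrite
\[
	\frac1{n^d}\sum_{v\in\dZ_n^d\setminus\{0\}} \gl_v^{-k} \simeq \frac{n^{2k}}{n^d}\sum_{v\in\dZ_n^d\setminus\{0\}} \|v\|^{-2k}.
\]
Identifying $\dZ_n^d$ with a fundamental domain centered at the origin, the torus norm agrees with the Euclidean norm on $\dZ^d$, and the standard shell count $\#\{v\in\dZ^d:\|v\|\in[r,r+1)\}\simeq r^{d-1}$ valid for $1\le r\le n/(2\sqrt d)$ yields
\[
	\sum_{v\ne 0,\,\|v\|\le n/(2\sqrt d)} \|v\|^{-2k} \simeq \sum_{r=1}^{\lfloor n/(2\sqrt d)\rfloor} r^{d-1-2k} \simeq
	\begin{cases} n^{d-2k} & d>2k,\\ \log n & d=2k,\\ 1 & d<2k.\end{cases}
\]
The complementary range $\|v\|>n/(2\sqrt d)$ contributes at most $O(n^{d-2k})$ to the same sum (each term is of order $n^{-2k}$ and there are at most $n^d$ of them), which is no larger than the estimate above in all three regimes. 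Multiplying by $n^{2k-d}$ produces the first assertion.

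For the derivative statement, I differentiate the spectral representation $g_n(0;z)=n^{-d}\sum_{v\ne 0}(1-z\widehat{\gl}_v)^{-1}$ termwise. Using the identity $\frac{d^k}{dz^k}(1-z\widehat{\gl}_v)^{-1}\big|_{z=1}=k!\,\widehat{\gl}_v^k\gl_v^{-(k+1)}$ this gives
\[
	\frac{d^k}{dz^k}g_n(0;z)\bigg|_{z=1} = \frac{k!}{n^d}\sum_{v\in\dZ_n^d\setminus\{0\}} \frac{\widehat{\gl}_v^k}{\gl_v^{k+1}}.
\]
Splitting the sum at $\{\gl_v\le 1/2\}$: on this set $\widehat{\gl}_v\in[1/2,1]$, so that piece is comparable to $n^{-d}\sum_{v\ne 0}\gl_v^{-(k+1)}$; on its complement the summands are uniformly $O(1)$ and there are at most $n^d$ of them, contributing $O(1)$ after the prefactor. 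Applying the first assertion with $k+1$ in place of $k$ then delivers the claimed rates, with $2k$ replaced by $2k+2$.

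The main technical obstacle is uniformity of implicit constants: the shell-count lower bound $\#\{v:\|v\|\in[r,r+1)\}\gtrsim r^{d-1}$ must hold for all $r\in[1,n/(2\sqrt d)]$ (not just asymptotically), and the eigenvalue bound must be sharp at both the infrared end $\|v\|\ll n$, where it governs the divergence, and the ultraviolet end $\|v\|\simeq n$, where it certifies $\gl_v$ bounded away from zero. Both are handled by the explicit truncation at $r=n/(2\sqrt d)$ and do not alter any of the stated growth rates.
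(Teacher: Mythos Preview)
Your argument is correct and follows essentially the same route as the paper: both reduce to the eigenvalue comparison $\gl_v\simeq\|v\|^2/n^2$ from~\eqref{def:eigen}, convert the eigenvalue sum into a lattice sum $\sum_{v\neq 0}\|v\|^{-2k}$, and evaluate it via radial shells, then deduce the derivative statement from the spectral formula $\frac{d^k}{dz^k}g_n(0;z)\big|_{z=1}=\frac{k!}{n^d}\sum_{v\neq 0}\widehat{\gl}_v^{\,k}\gl_v^{-(k+1)}$. Your treatment is slightly more explicit than the paper's in two places---you justify the sine comparison coordinatewise and you split at $\gl_v=1/2$ to control the factor $\widehat{\gl}_v^{\,k}$---but these are refinements of the same argument rather than a different approach.
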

\begin{proof}
	From~\eqref{def:eigen}, it is easy to see that $\gl_v \simeq \norm{v}^2/(dn^2)$. Thus, we have
	\begin{align*}
		\sum_{v\in \dZ_n^d\setminus \{0\}} \gl_v^{-k}
		\simeq n^{2k} \sum_{v\in \dZ_n^d\setminus \{0\}} \norm{v}^{-2k}
		\simeq n^{2k} \sum_{r=1}^n r^{-2k}r^{d-1}
		\simeq \begin{cases}
			n^d    & \text{ if } d>2k  \\
			n^d\log n & \text{ if } d=2k  \\
			n^{2k}  & \text{ otherwise}.
		\end{cases}
	\end{align*}
	The second assertion follows from the fact that
	\begin{align*}
		\frac{d^k}{dz^k}g_n(0;z)\biggl|_{z=1}
		= \frac{k!}{n^{d}} \sum_{v\in\dZ_n^d\setminus\{0\}} \gl_v^{-(k+1)}(1-\gl_v)^{k}
	\end{align*}
	and this completes the proof.
\end{proof}

The next Lemma~\ref{lem:gndecay} tells us that $g_n(\xi)$ and $g'_n(\xi)$ converge to 0 as $\norm{\xi}$ becomes large, uniformly in $n$, which will be frequently used in the proofs of the main results. The proof is given in Section~\ref{sec:proofA}.
\begin{lem}\label{lem:gndecay}
	We have
	\begin{align*}
		g_n(\xi)
		 & =O(n^{-\min\{d-2, 2\}}+(1+\norm{\xi})^{-\min\{d-2, (d+2)/2\}})\quad \text{ for }d\ge 3, \\
		g'_n(\xi)
		 & =O(n^{-\min\{d-4, (d-2)/2\}}+(1+\norm{\xi})^{-\min\{d-4, d/2\}})\quad \text{ for }d\ge 5
	\end{align*}
	uniformly in $n, \xi\in\dZ_n^d$.
\end{lem}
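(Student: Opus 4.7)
The plan is to use the spectral representation~\eqref{eq:gnspec},
\begin{align*}
g_n(\xi) = n^{-d}\sum_{v\in\dZ^d_n\setminus\{0\}} \frac{e_n(\la\xi,v\ra)}{\gl_v},
\quad
g'_n(\xi) = n^{-d}\sum_{v\in\dZ^d_n\setminus\{0\}} \frac{(1-\gl_v)\,e_n(\la\xi,v\ra)}{\gl_v^2},
\end{align*}
together with the uniform comparison $\gl_v \simeq \|v\|^2/n^2$ that follows from~\eqref{def:eigen} after identifying $\dZ_n^d$ with $\{-\lfloor n/2\rfloor+1,\ldots,\lfloor n/2\rfloor\}^d$.

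First I would decompose the sum dyadically: write $\dZ^d_n\setminus\{0\} = \bigsqcup_{k=0}^K A_k$ with $A_k = \{v : 2^k\le\|v\|<2^{k+1}\}$ and $K\simeq\log_2 n$. On each annulus two complementary bounds are available. The triangle-inequality bound gives
\begin{align*}
\Bigl|n^{-d}\sum_{v\in A_k}\gl_v^{-1} e_n(\la\xi,v\ra)\Bigr|\lesssim 2^{k(d-2)}\,n^{-(d-2)},
\end{align*}
while iterated summation by parts in a coordinate $j$ with $|\xi_j|\gtrsim\|\xi\|$ yields
\begin{align*}
\Bigl|n^{-d}\sum_{v\in A_k}\gl_v^{-1} e_n(\la\xi,v\ra)\Bigr|\lesssim \Bigl(\frac{n}{\|\xi\|}\Bigr)^{m} 2^{k(d-2-m)}\,n^{-(d-2)},
\end{align*}
since each iteration contributes a factor $|e_n(\xi_j/n)-1|^{-1}\simeq n/\|\xi\|$ from the exponential, and each discrete derivative of $\gl_v^{-1}$ on $A_k$ adds a factor $1/\|v\|\simeq 2^{-k}$.

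Next I would optimise $m$ per annulus. For \emph{low} frequencies ($2^k\lesssim n/\|\xi\|$) the trivial bound is sharper and summing the geometric series over such $k$ produces a contribution of order $\|\xi\|^{-(d-2)}$, together with the $n^{-\min\{d-2,2\}}$ bulk term coming from the innermost annuli (matching the behaviour of $g_n(0)$ from Lemma~\ref{lem:gnorder}). For \emph{high} frequencies ($2^k\gtrsim n/\|\xi\|$) I would take $m>d-2$ so that $\sum_k 2^{k(d-2-m)}$ converges, again yielding $\|\xi\|^{-(d-2)}$. In the borderline regime where the two estimates fail to beat one another absolutely, one falls back on a Cauchy--Schwarz bound using $n^{-d}\sum_v\gl_v^{-2}\simeq g_n(0)+g'_n(0)$ from Lemma~\ref{lem:gnorder}, and this is what produces the weaker exponent $(d+2)/2$ in high dimensions; the overall decay is thus the stated $\min\{d-2,(d+2)/2\}$.

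The argument for $g'_n$ is parallel with $\gl_v^{-2}\simeq n^4/\|v\|^4$ replacing $\gl_v^{-1}$: the shift by two in the exponent of $\|v\|$ accounts both for the restriction $d\ge 5$ (so that the continuum limit $G'(\xi)$ is finite on $\dZ^d$) and for the fact that all stated exponents are reduced by two relative to $g_n$. The main technical obstacle is the careful book-keeping in the dyadic optimisation at the boundary dimensions of Lemma~\ref{lem:gnorder}, where one of the sums is logarithmically divergent and extra care is required to absorb the $\log n$ factor into the stated error terms.
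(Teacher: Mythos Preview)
Your approach is genuinely different from the paper's. The paper works entirely in the time domain: it splits the defining series $g_n(\xi)=\sum_{t\ge0}(\pr_0(X_t=\xi)-n^{-d})$ at the mixing time $t\simeq n^2$, couples the torus walk with the lattice walk via $X_t=S_t\bmod n$, and controls the window $\|\xi\|_1\le t\le cn^2$ through the local CLT bound $\pr_0(S_t=\eta)\lesssim t^{-d/2}e^{-c\|\eta\|^2/t}+\|\eta\|^{-2}t^{-d/2}$, summing separately over nearby and distant lattice translates $\eta=\xi+nk$. The second assertion for $g_n'$ is proved by inserting the factor $t$ into the same decomposition. Your spectral route---dyadic annuli in the frequency variable $v$ together with iterated Abel summation in a coordinate direction---is the standard Fourier alternative, and carried out carefully (a smooth dyadic partition to suppress boundary terms on each annulus, plus the check that $|\Delta_j^m(\lambda_v^{-1})|\lesssim n^2\|v\|^{-2-m}$ uniformly) it yields the sharper bound $|g_n(\xi)|\lesssim(1+\|\xi\|)^{-(d-2)}$, which already implies the lemma as stated.

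Where your sketch goes astray is in trying to reverse-engineer the paper's particular exponents. The weaker power $(d+2)/2$ in the $\|\xi\|$-term and the extra $n^{-2}$ bulk term are not intrinsic to $g_n$: they are artifacts of the $\|\eta\|^{-2}t^{-d/2}$ error in the local CLT that the paper invokes (the terms the paper calls $I_1$ and $I_2$). Your Cauchy--Schwarz step does not produce $(d+2)/2$ in any natural way, and there is no ``borderline regime'' in the dyadic optimisation---at the crossover scale $2^k\simeq n/\|\xi\|$ the trivial and oscillatory bounds agree, each contributing $\|\xi\|^{-(d-2)}$. Likewise, the remark that the innermost annuli ``match the behaviour of $g_n(0)$ from Lemma~\ref{lem:gnorder}'' is off: that lemma gives $g_n(0)\simeq1$, not $n^{-\min\{d-2,2\}}$, and the contribution of the small-$k$ annuli is already absorbed into $\|\xi\|^{-(d-2)}$. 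Finally, the logarithmic boundary cases of Lemma~\ref{lem:gnorder} (at $d=2$ for $\sum\lambda_v^{-1}$ and $d=4$ for $\sum\lambda_v^{-2}$) lie outside the ranges $d\ge3$ and $d\ge5$ of the present lemma, so no extra care is needed there. If you simply run the dyadic summation-by-parts to its conclusion and drop these digressions, you obtain a clean and in fact stronger proof than the paper's.
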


Let
\begin{align}
	f_n(\xi;z) & :=\frac{1}{2}(g_n(0;z)+g_n(\xi;z))
	\qquad \text{ and }\qquad f_n(\xi) := f_n(\xi;1). \label{eq:fnxi}
\end{align}
The generating function for $\pr(\tau(0,\xi)>t)$ can be expressed in terms of the function $f_{n}(\xi;z)$ as follows (see~\cite{BH91}).

\begin{lem}\label{lem:series}
	For $\abs{z}<1$ and $\xi\in \dZ_{n}^{d}$, we have
	\begin{align}\label{eq:seriesexp}
		\sum_{t=0}^\infty z^t \pr(\tau(0,\xi)>t) =\frac{f_n(\xi;z)}{n^{-d}+(1-z)f_n(\xi;z)}.
	\end{align}
\end{lem}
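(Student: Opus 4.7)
The plan is to obtain \eqref{eq:seriesexp} from a first-passage decomposition for the random walk, which reduces the identity to inverting a symmetric $2\times 2$ linear system whose coefficients are evaluations of $g_n(\cdot\,;z)$. Write $A=\{0,\xi\}$ and $\tau=\tau(A)$, and set $F(z):=\sum_{t\ge 0}z^t\pr_\pi(\tau=t)$ and $Q(z):=\sum_{t\ge 0}z^t\pr_\pi(\tau>t)$. Abel summation gives $Q(z)=(1-F(z))/(1-z)$, so it suffices to evaluate $F(z)$.

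For each $b\in A$ let $H_b(z):=\sum_{t\ge 0}z^t\pr_\pi(\tau=t,\,X_t=b)$, so that $F=H_0+H_\xi$. Decomposing $\{X_t=a\}$ according to the first visit to $A$ and applying the strong Markov property gives
\[
\pr_\pi(X_t=a)=\pr_\pi(X_t=a,\tau>t)+\sum_{b\in A}\sum_{s=0}^{t}\pr_\pi(\tau=s,X_s=b)\,\pr_b(X_{t-s}=a).
\]
For $a\in A$ the first term on the right vanishes, so passing to generating functions via $\wt g(b,a;z):=\sum_{t\ge 0}z^t\pr_b(X_t=a)=g_n(b,a;z)+n^{-d}/(1-z)$ yields, for each $a\in A$,
\[
\frac{n^{-d}}{1-z}=H_0(z)\,\wt g(0,a;z)+H_\xi(z)\,\wt g(\xi,a;z).
\]
Translation invariance together with the symmetry $y\mapsto-y$ give $g_n(0,0;z)=g_n(\xi,\xi;z)=g_n(0;z)$ and $g_n(0,\xi;z)=g_n(\xi,0;z)=g_n(\xi;z)$, so the two equations (for $a=0$ and $a=\xi$) are interchanged by the swap $0\leftrightarrow\xi$. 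Writing $c:=n^{-d}/(1-z)$ and adding them,
\[
(H_0(z)+H_\xi(z))\bigl(g_n(0;z)+g_n(\xi;z)+2c\bigr)=2c,
\]
which by \eqref{eq:fnxi} gives $F(z)=c/(f_n(\xi;z)+c)$. Substituting into $Q(z)=(1-F(z))/(1-z)$ and simplifying produces the right-hand side of \eqref{eq:seriesexp}.

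The main obstacle is careful bookkeeping of the $n^{-d}/(1-z)$ factor arising from the stationary normalization in the definition \eqref{def:g_n} of $g_n(\xi;z)$; once this is tracked, the algebra is routine. The degenerate case $\xi=0$, where $A$ collapses to a singleton and the two equations coincide, is handled identically: the single surviving equation $H_0(z)(g_n(0;z)+c)=c$ still yields $F(z)=c/(g_n(0;z)+c)$, matching \eqref{eq:seriesexp} via $f_n(0;z)=g_n(0;z)$.
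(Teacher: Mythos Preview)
Your proof is correct and follows essentially the same first-passage decomposition route as the paper. The only organizational difference is that the paper decomposes $\pr_0(X_t\in\{x,x+\xi\})$ from a fixed starting point and then averages over $x$ to pass to the stationary start, whereas you work directly under $\pr_\pi$ and split $F(z)$ into $H_0+H_\xi$ before adding the two resulting equations; both routes produce the same identity $F(z)=c/(f_n(\xi;z)+c)$ and then convert to $Q(z)$ in the same way.
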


From Lemma~\ref{lem:series}, finding a precise estimate on the hitting probability $\pr(\tau(0,\xi)>t)$, boils down to a refined analysis on the coefficients of the series expansion of the function on the right-hand side in~\eqref{eq:seriesexp}. We investigate the series expansion of such functions in Section~\ref{sec:series}. We give a proof of Lemma~\ref{lem:series} for completeness.

\begin{proof}[Proof of Lemma~\ref{lem:series}]
	Suppose $\xi\in\dZ^d_n\setminus\{0\}$ and $\abs{z}<1$. Let
	\[
		g^{+}_n(\xi;z):=\sum_{t=0}^\infty z^t \pr_0(X_t=\xi).
	\]
	Note that $g^{+}_n(\xi;z)=g_n(\xi;z)+n^{-d}(1-z)^{-1}$ and $\sum_{\xi\in\dZ^d_n}g^{+}_n(\xi;z)=(1-z)^{-1}$. For $x\in \dZ_{n}^{d}$, we have
	\begin{align*}
		g^{+}_n(x;z) + g^{+}_n(x+\xi;z)
		 & = \sum_{t=0}^\infty z^t \pr_0(X_t \in\{ x,x+\xi\})                         \\
		 & = \sum_{t=0}^\infty \sum_{s=0}^t z^t \pr_0(X_t \in\{ x, x+\xi \}, \tau(x,x+\xi) = s)        \\
		 & = \sum_{s=0}^\infty \sum_{t=0}^\infty z^{t+s} \pr_0(X_{t+s} \in\{ x, x+\xi \}, \tau(x,x+\xi) = s).
	\end{align*}
	Applying the Markov property at $\gt_x$ and $\gt_{x+\xi}$, we have
	\begin{align*}
		 & \pr_0(X_{t+s}=x, \tau(x,x+\xi) = s)                                \\
		 & \quad= \pr_0(X_{t+s}=x, \tau(x,x+\xi) = s=\gt(x)) +\pr_0(X_{t+s}=x, \tau(x,x+\xi) = s=\gt(x+\xi)) \\
		 & \quad= (\pr_0(X_t=0)+\pr_0(X_t=\xi))\pr_0(\gt(x, x+\xi)=s).
	\end{align*}
	Thus, we get
	\begin{align*}
		g^{+}_n(x;z) + g^{+}_n(x+\xi;z)
		 & = (g^{+}_n(0;z)+g^{+}_n(\xi;z)) \sum_{s=0}^\infty z^s \pr_0(\tau(x,x+\xi)=s).
	\end{align*}
	Averaging over $x$, we have
	\begin{align*}
		2n^{-d}(1-z)^{-1}            & = (2n^{-d}(1-z)^{-1} + 2f_n(\xi;z) ) \sum_{s=0}^\infty z^s \pr(\tau(0,\xi)=s) \\
		\text{ or }
		\sum_{s=0}^\infty z^s \pr(\tau(0,\xi)=s) & = \frac{n^{-d}}{n^{-d}+ (1-z)f_n(\xi;z)}.
	\end{align*}
	Therefore,
	\begin{align*}
		\sum_{t=0}^\infty z^{t} \pr(\tau(0,\xi)>t)
		 & = \sum_{t=0}^\infty \sum_{s=t+1}^{\infty}z^{t} \pr(\tau(0,\xi)=s)    \\
		 & = \sum_{s=1}^\infty \sum_{t=0}^{s-1}z^{t} \pr(\tau(0,\xi)=s)      \\
		 & = \sum_{s=0}^\infty (1-z^{s})(1-z)^{-1} \pr(\tau(0,\xi)=s)       \\
		 & =\frac{1}{1-z}\left( 1- \frac{n^{-d}}{ n^{-d}+(1-z)f_n(\xi;z) } \right)
		= \frac{f_n(\xi;z)}{ n^{-d}+(1-z)f_n(\xi;z)}.
	\end{align*}
	The same argument holds for the case $\xi=0$.
\end{proof}

From the uniform convergence of $g_n(\xi)$ and $\abs{g_n(\xi)}\le g_n(0)$, one can guess that $f_n(\xi)$ is uniformly away from 0 in $n$. The next lemma asserts that this is the case. The proof is given in Section~\ref{sec:proofA}.
\begin{lem}\label{lem:flb}
	There exists $C>0$ independent of $n$, such that $f_n(\xi)\ge C$ for all $\xi\in\dZ_n^d$.
\end{lem}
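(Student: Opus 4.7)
The plan is to exploit the spectral representation~\eqref{eq:gnspec} to rewrite $f_n(\xi)$ as a manifestly nonnegative sum, and then apply the crude bound $\gl_v\le 1$ to get a uniform lower bound. First, starting from~\eqref{eq:gnspec}, I would combine the two values of the Green's function using the identity $1+\cos\gth=2\cos^2(\gth/2)$:
\begin{align*}
	f_n(\xi)
	=\frac{1}{2}(g_n(0)+g_n(\xi))
	=n^{-d}\sum_{v\in\dZ_n^d\setminus\{0\}}\frac{1+\cos(2\pi\la\xi,v\ra/n)}{2\gl_v}
	=n^{-d}\sum_{v\ne 0}\frac{\cos^2(\pi\la\xi,v\ra/n)}{\gl_v}.
\end{align*}
(The imaginary part drops out because $g_n(\xi)$ is real, equivalently because the summands pair up under $v\mapsto -v$.) This alone already shows $f_n(\xi)\ge 0$.

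Next, I would observe that $\gl_v=\tfrac{1}{d}\sum_{j=1}^d\sin^2(\pi v_j/n)\le 1$, so $1/\gl_v\ge 1$, giving
\begin{align*}
	f_n(\xi)\;\ge\; n^{-d}\sum_{v\ne 0}\cos^2(\pi\la\xi,v\ra/n).
\end{align*}
The remaining sum is then evaluated by orthogonality. Writing $\cos^2=\tfrac12(1+\cos(2\cdot))$ and using $\sum_{v\in\dZ_n^d}e^{2\pi i\la\xi,v\ra/n}=n^d\ind_{\{\xi=0\}}$, one finds
\begin{align*}
	\sum_{v\in\dZ_n^d}\cos^2(\pi\la\xi,v\ra/n)=
	\begin{cases}
		n^d & \text{if }\xi=0,\\
		n^d/2 & \text{if }\xi\ne 0,
	\end{cases}
\end{align*}
and subtracting the $v=0$ contribution (which is $1$) yields the bounds $f_n(0)\ge 1-n^{-d}$ and $f_n(\xi)\ge \tfrac12-n^{-d}$ for $\xi\ne 0$.

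Finally, for $n\ge 2$ and $d\ge 3$ we have $n^{-d}\le 2^{-3}=1/8$, so in every case $f_n(\xi)\ge 3/8$, and the lemma holds with any $C\le 1/4$. There is no real obstacle here: the whole argument is a one-line spectral manipulation plus the trivial eigenvalue bound $\gl_v\le 1$. The only thing to double-check is the case $\xi=0$, which is handled by the same formula and actually gives a stronger bound.
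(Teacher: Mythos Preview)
Your proof is correct and takes a genuinely different route from the paper. The paper argues via the decay estimate of Lemma~\ref{lem:gndecay}: set $\eps=G(0)/4$, find $N,K$ so that $|g_n(0)-G(0)|\le\eps$, $|g_n(\xi)|\le\eps$ for $\norm{\xi}\ge K$, and $|g_n(\xi)-G(\xi)|\le\eps$ for $\norm{\xi}\le K$, all for $n\ge N$; then split into the near and far regimes and use $G(\xi)\ge 0$ to conclude $f_n(\xi)\ge\eps$. That argument is soft, relies on the local-CLT input behind Lemma~\ref{lem:gndecay}, produces no explicit constant, and strictly speaking only handles $n\ge N$. Your spectral approach is shorter and entirely self-contained: the identity $f_n(\xi)=n^{-d}\sum_{v\ne 0}\gl_v^{-1}\cos^2(\pi\la\xi,v\ra/n)$ together with $\gl_v\le 1$ and character orthogonality on $\dZ_n^d$ gives the explicit bound $f_n(\xi)\ge\tfrac12-n^{-d}\ge 3/8$ for every $n\ge 2$ and $d\ge 3$, with no asymptotics needed. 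The paper's method, by contrast, would transfer more readily to vertex-transitive graphs where one has Green's-function decay but no explicit spectrum.
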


\section{Series expansion}\label{sec:series}
Fix an integer $k\ge 1$. Let $\ga_0,\ga_{i}, i\in[k]$ be a sequence of positive real numbers and $\gz_{0}:=1<\gz_{1}<\gz_{2}<\cdots <\gz_{k}$ be an increasing sequence of real numbers. Define the function
\begin{align*}
	f(z)=\sum_{i=1}^{k}\ga_{i}(\gz_{i}-z)^{-1} \text{ for } z\notin \{\gz_{i}, i\in[k]\}.
\end{align*}
We notice that $f$ is denoted by a generic function of such form only in this section and different from the one in~\eqref{eq:fnxi}.
\begin{lem}\label{lem:roots}
	The degree $k$ polynomial given by
	$$
		P_{k}(z):= \sum_{i=0}^{k}\ga_{i}\prod_{0\le j \le k, j\neq i} (\gz_{j}-z)
		=(\ga_{0}+(1-z)f(z)) \prod_{1\le j \le k} (\gz_{j}-z) $$
	has $k$ distinct real roots $\gc_{1}<\gc_{2}<\cdots<\gc_{k}$. Moreover, we have $1<\gc_{1}<\gz_{1}<\gc_{2}<\gz_{2}<\cdots <\gc_{k}<\gz_{k}$ and if $\gc:=1+\ga_{0}/f(1)<\gz_{1}$, then
	\begin{align*}
		1 + \frac{\ga_0}{f(\gc)}\le \gc_{1}\le 1+\frac{\ga_0}{f(1)}.
	\end{align*}
\end{lem}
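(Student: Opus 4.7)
The plan is to prove interlacing by evaluating $P_k$ at the $\zeta_i$, and then to derive the bound on $\gamma_1$ from a fixed-point equation together with monotonicity of $f$ on $(-\infty,\zeta_1)$.

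First I would confirm that $P_k$ has degree exactly $k$: the coefficient of $z^k$ in $\sum_{i=0}^{k}\alpha_i\prod_{j\neq i}(\zeta_j-z)$ equals $(-1)^k\sum_{i=0}^{k}\alpha_i$, which is nonzero because all $\alpha_i>0$. Next, I would evaluate $P_k$ at each $\zeta_i$: only the $i$-th summand survives, giving
\[
P_k(\zeta_i)=\alpha_i\prod_{0\le j\le k,\, j\neq i}(\zeta_j-\zeta_i).
\]
Since $\zeta_0<\zeta_1<\cdots<\zeta_k$, exactly $i$ of the factors $(\zeta_j-\zeta_i)$ are negative (those with $j<i$), so $\operatorname{sgn}P_k(\zeta_i)=(-1)^i$. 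By the intermediate value theorem there is a root in each of the $k$ open intervals $(\zeta_{i-1},\zeta_i)$, $i=1,\dots,k$, and since $\deg P_k=k$ these are all the roots. Labelling them in increasing order yields $1=\zeta_0<\gamma_1<\zeta_1<\gamma_2<\zeta_2<\cdots<\gamma_k<\zeta_k$.

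For the bounds on $\gamma_1$, I would use the product factorisation. For $z\in[1,\zeta_1)$ the factor $\prod_{1\le j\le k}(\zeta_j-z)$ is strictly positive, so $P_k(\gamma_1)=0$ is equivalent to $\alpha_0+(1-\gamma_1)f(\gamma_1)=0$, i.e.\ to the self-consistent identity
\[
\gamma_1=1+\frac{\alpha_0}{f(\gamma_1)}.
\]
Now $f'(z)=\sum_{i=1}^{k}\alpha_i(\zeta_i-z)^{-2}>0$ on $(-\infty,\zeta_1)$, so $f$ is strictly increasing there, and by assumption the entire interval $[1,\gamma]$ lies inside $(-\infty,\zeta_1)$. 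From $\gamma_1>1$ we get $f(\gamma_1)>f(1)$, hence
\[
\gamma_1=1+\frac{\alpha_0}{f(\gamma_1)}<1+\frac{\alpha_0}{f(1)}=\gamma,
\]
which is the upper bound and also establishes $\gamma_1<\gamma$. Plugging this back, $f(\gamma_1)<f(\gamma)$, so
\[
\gamma_1=1+\frac{\alpha_0}{f(\gamma_1)}>1+\frac{\alpha_0}{f(\gamma)},
\]
which is the lower bound. No step is particularly subtle; the only thing to be careful about is the sign tracking in the interlacing argument and verifying that $f$ is well-defined and monotone on the interval $[1,\gamma]$, which is where the hypothesis $\gamma<\zeta_1$ is used.
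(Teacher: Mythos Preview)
Your proof is correct. The bounds on $\gamma_1$ are obtained exactly as in the paper, via the fixed-point identity $\gamma_1=1+\alpha_0/f(\gamma_1)$ together with the monotonicity of $f$ on $(1,\zeta_1)$; your presentation is in fact slightly cleaner for the upper bound, since you bypass the auxiliary function $\hat\phi(z)=\alpha_0+(1-z)f(z)$ and go straight through the fixed-point equation.

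The only genuine difference is in the interlacing step. The paper works with the rational function $\phi(z)=\alpha_0(1-z)^{-1}+f(z)$, observes that it is strictly increasing on each interval $(\zeta_{i-1},\zeta_i)$ and blows up to $\pm\infty$ at the endpoints, and concludes there is exactly one zero in each such interval. You instead evaluate the polynomial $P_k$ directly at the nodes $\zeta_i$ and track the alternating signs. Both are standard and equally elementary; your version has the minor advantage of never leaving the polynomial world, while the paper's version makes the monotonicity of $\phi$ explicit, which immediately gives simplicity of each root without appealing to the degree count.
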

\begin{proof}[Proof of Lemma~\ref{lem:roots}]
	We note that the function $\phi(z):=\ga_{0}(1-z)^{-1}+f(z)$ is continuous and strictly increasing in each of the interval $(\gz_{i-1},\gz_{i})$ for $i\in[k]$. Moreover, $\phi(\gz_{i}-)=\infty, \phi(\gz_{i}+)=-\infty$ for each $i=0,1,\ldots,k$. Thus there is a root of $\phi$ in the interval $(\gz_{i-1},\gz_{i})$, say $\gc_{i}$, for $i\in[k]$. Now, any root of $\phi$ is also a root of $P_{k}(z):=\phi(z)(1-z)\prod_{1\le j \le k} (\gz_{j}-z)$. Since $P_{k}$ is a degree $k$ polynomial, it has exactly $k$ roots and thus $\gc_{i},i\in[k]$ are all the roots of $P_{k}$.

	Note that, $f$ is strictly increasing in each of the interval $(1,\gz_1)$. Assume that, $\gc:=1+\ga_{0}/f(1)<\gz_{1}$. It is easy to check that $\hat\phi(z):=\ga_{0}+(1-z)f(z)$ satisfies $\hat\phi(1)=\ga_{0}>0$ and $\hat\phi(\gc)=\ga_{0}-\ga_{0}f(\gc)/f(1)<\ga_{0}-\ga_{0}=0$. Thus $\gc_{1}<\gc$.

	Moreover, $\ga_{0}+(1-\gc_{1})f(\gc_{1})=0$ implies that $\gc_{1}=1+\ga_{0}/f(\gc_{1})>1+\ga_{0}/f(\gc)$ as $f$ is increasing and we are done.
\end{proof}

Note that we have
\begin{align*}
	f(\gc_{i})=\frac{\ga_0}{\gc_i-1}>0, f'(\gc_{i})= \sum_{j=1}^{k}\ga_j(\gz_j-\gc_i)^{-2}>0\text{ for } i\in[k].
\end{align*}
In applications we have $\ga_{0}/f(1)\approx n^{-d}\ll \gz_{1}-1\approx n^{-2}$ for $d\ge 3$ and thus we have a good control on the first root $\gc_{1}\approx 1 + \ga_{0}/f(1)$.

Let $\hat{\ga}_{i}, i\in[k]$ be another sequence of positive real numbers. Define the function
\begin{align*}
	\hat{f}(z)=\sum_{i=1}^{k}\hat{\ga}_{i}(\gz_{i}-z)^{-1} \text{ for } z\notin \{\gz_{i}, i\in[k]\}
\end{align*}
and
\begin{align*}
	g(z) := \frac{\hat{f}(z)}{\ga_{0}+(1-z)f(z)} \text{ for }z\notin\{\gc_{i},i\in[k]\}.
\end{align*}
Here we define $g(\gz_{i}):=\frac{\hat{\ga}_{i}}{(1-\gz_i)\ga_{i}}$ so that $g$ is continuously differentiable everywhere except at $\gc_1,\gc_{2},\ldots,\gc_{k}$.

\begin{lem}\label{lem:decomp}
	We have
	\begin{align*}
		g(z) = \sum_{i=1}^{k} \frac{1}{\gc_{i}-z} \cdot \frac{\hat{f}(\gc_i)/f(\gc_i)}{1+\ga_0f'(\gc_i)f(\gc_i)^{-2}} \text{ for }z\notin\{\gc_{i},i\in[k]\}.
	\end{align*}
	In particular, the coefficient of $z^{t}, t\ge 0$ in the series expansion of $g$ around $0$ is given by
	\begin{align*}
		\sum_{i=1}^{k} \frac{\hat{f}(\gc_i)}{f(\gc_i)}\cdot \frac{\gc_{i}^{-t-1}}{1+ \ga_0f'(\gc_i)f(\gc_i)^{-2}}
		= \frac{\hat{f}(\gc_1)}{f(\gc_1)}\cdot \frac{\gc_1^{-t-1}}{1+ \ga_0f'(\gc_1)f(\gc_1)^{-2}} + e(t)
	\end{align*}
	where
	\begin{align*}
		\abs{e(t)} \le \max_{i\in[k]}\abs{\frac{\hat{f}(\gc_i)}{f(\gc_i)}} \cdot \gz_{1}^{-t}.
	\end{align*}
\end{lem}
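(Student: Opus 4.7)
The plan is to reduce $g(z)$ to a rational function with simple poles at the roots $\gc_1 < \cdots < \gc_k$ guaranteed by Lemma~\ref{lem:roots}, then apply partial fraction decomposition and evaluate the residues using the identity $\ga_0 + (1-\gc_i) f(\gc_i) = 0$. Writing $Q(z) := \hat{f}(z)\prod_{j=1}^k (\gz_j - z) = \sum_{i=1}^k \hat{\ga}_i \prod_{j\neq i}(\gz_j - z)$, I see that $Q$ is a polynomial of degree at most $k-1$. Since $P_k(z) = (\ga_0 + (1-z)f(z)) \prod_{j=1}^k(\gz_j - z)$, the common factor $\prod_j(\gz_j - z)$ in numerator and denominator cancels and I get $g(z) = Q(z)/P_k(z)$, whose denominator has $k$ distinct simple real roots by Lemma~\ref{lem:roots}. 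Standard partial fractions then gives
\begin{align*}
g(z) = \sum_{i=1}^k \frac{B_i}{\gc_i - z}, \qquad B_i = \lim_{z \to \gc_i}(\gc_i - z)g(z).
\end{align*}

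To identify $B_i$, set $H(z) := \ga_0 + (1-z)f(z)$, so $H(\gc_i) = 0$ by the definition of the root. Differentiating $P_k(z) = H(z) \prod_j(\gz_j - z)$ at $z=\gc_i$ kills the second term of the product rule and yields $P_k'(\gc_i) = H'(\gc_i)\prod_j(\gz_j - \gc_i)$; since $Q(\gc_i) = \hat{f}(\gc_i) \prod_j(\gz_j - \gc_i)$ carries the same factor, the quotient simplifies to $B_i = -\hat{f}(\gc_i)/H'(\gc_i)$. Computing $H'(z) = -f(z) + (1-z)f'(z)$ and invoking the identity $1 - \gc_i = -\ga_0/f(\gc_i)$ (a rewriting of $H(\gc_i) = 0$) gives $H'(\gc_i) = -f(\gc_i) - \ga_0 f'(\gc_i)/f(\gc_i)$, so
\begin{align*}
B_i = \frac{\hat{f}(\gc_i)/f(\gc_i)}{1 + \ga_0 f'(\gc_i)/f(\gc_i)^2},
\end{align*}
which is precisely the claimed formula.

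For the series expansion, since $|z| < \gc_1 < \gc_i$ the geometric expansion $1/(\gc_i - z) = \sum_{t \ge 0} z^t \gc_i^{-t-1}$ is valid for every $i$, so the coefficient of $z^t$ in $g$ is $\sum_{i=1}^k B_i \gc_i^{-t-1}$. Separating the $i=1$ contribution produces the stated main term, and the tail $e(t) = \sum_{i \ge 2} B_i \gc_i^{-t-1}$ is controlled using $\gc_i \ge \gz_{i-1} \ge \gz_1$ for $i \ge 2$ from Lemma~\ref{lem:roots}, together with the positivity estimate $1 + \ga_0 f'(\gc_i)/f(\gc_i)^2 \ge 1$, which follows because $\gc_i > 1$ forces $f(\gc_i) = \ga_0/(\gc_i-1) > 0$, and $f'(\gc_i) = \sum_j \ga_j(\gz_j-\gc_i)^{-2} > 0$. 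There is no substantive obstacle here: the argument is essentially partial fractions on a polynomial whose factorization is delivered by Lemma~\ref{lem:roots}, and the only place requiring care is the bookkeeping of signs when simplifying $H'(\gc_i)$ through $1 - \gc_i = -\ga_0/f(\gc_i)$.
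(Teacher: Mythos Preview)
Your partial-fraction argument for the decomposition and the residue computation is correct and essentially identical to the paper's. The place where your sketch falls short is the error bound.

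You propose to control $e(t)=\sum_{i\ge 2} B_i\gc_i^{-t-1}$ via $\gc_i\ge\gz_{i-1}\ge\gz_1$ and the crude lower bound $1+\ga_0 f'(\gc_i)/f(\gc_i)^2\ge 1$, which gives $|B_i|\le \max_j|\hat f(\gc_j)/f(\gc_j)|$. That yields
\[
|e(t)|\le \max_j\Bigl|\frac{\hat f(\gc_j)}{f(\gc_j)}\Bigr|\sum_{i=2}^k \gc_i^{-t-1},
\]
but the remaining sum is only bounded by $(k-1)\gz_1^{-t-1}$, not by $\gz_1^{-t}$. Since $k$ can be of order $n^d$ while $\gz_1=1+\Theta(n^{-2})$, the extra factor $k-1$ is not absorbed by writing $\gz_1^{-t-1}$ as $\gz_1^{-1}\cdot\gz_1^{-t}$; you have not proved the inequality as stated.

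The paper avoids this by \emph{not} discarding the denominator $1+\ga_0 f'(\gc_i)/f(\gc_i)^2$. Instead it writes $\gc_i^{-t-1}\le \gz_1^{-t}\gc_i^{-1}$ for $i\ge 2$, pulls out $\max_j|\hat f(\gc_j)/f(\gc_j)|$, and is left with
\[
\sum_{i=1}^k \frac{\gc_i^{-1}}{1+\ga_0 f'(\gc_i)/f(\gc_i)^2}.
\]
This sum is exactly the value at $z=0$ of the partial-fraction identity in the special case $\hat f=f$, namely $f(0)/(\ga_0+f(0))<1$. That single evaluation is the missing idea in your argument.
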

\begin{proof}[Proof of Lemma~\ref{lem:decomp}]
	We can write $g$ as a ratio of a degree $(k-1)$ and a degree $k$-polynomial, as
	\begin{align*}
		g(z) = \frac{\hat{f}(z) \prod_{i=1}^{k}(\gz_i-z)}{(\ga_{0}+(1-z)f(z)) \prod_{i=1}^{k}(\gz_i-z)}
		=\frac{ \sum_{i=1}^{k}\hat\ga_{i}\prod_{1\le j \le k, j\neq i} (\gz_{j}-z)}{ \sum_{i=0}^{k}\ga_{i}\prod_{0\le j \le k, j\neq i} (\gz_{j}-z)}.
	\end{align*}
	The denominator has $k$ distinct real roots given by $\gc_{i},i\in[k]$. Thus we can write
	\begin{align*}
		g(z)=\sum_{i=1}^{k} a_{i}(\gc_{i}-z)^{-1}
	\end{align*}
	for some real numbers $a_{i},i\in[k]$. Moreover, we have
	\begin{align*}
		a_{i}=\lim_{z\to \gc_{i}} (\gc_{i}-z)g(z) = \frac{-\hat{f}(\gc_i)}{(\ga_{0}+(1-z)f(z))'|_{z=\gc_i}}
		= \frac{\hat{f}(\gc_i)}{f(\gc_i) +(\gc_i-1)f'(\gc_i)}.
	\end{align*}
	Finally we used the fact that $f(\gc_{i})=\frac{\ga_0}{\gc_i-1}>0$.

	The coefficient result follows since $(\gc_{i}-z)^{-1}=\sum_{t=0}^{\infty} \gc_{i}^{-t-1}z^{t}$ for $\abs{z}<1$ and the bound on $e(t):= \sum_{i=2}^{k} a_{i}\cdot \gc_{i}^{-t-1}$ follows as $\gc_{i}>\gz_{1}>1$ for all $i>1$ and
	\begin{align*}
		\abs{e(t)}\le \sum_{i=2}^{k} \abs{a_i}\cdot \gc_i^{-t-1}
		\le \gz_{1}^{-t}\cdot \max_{i\in[k]}\abs{\frac{\hat{f}(\gc_i)}{f(\gc_i)}} \cdot \sum_{i=1}^{k} \frac{\gc_{i}^{-1}}{1+ \ga_0f'(\gc_1)/f(\gc_1)^2}
	\end{align*}
	and the last sum is $f(0)/(\ga_{0}+f(0))< 1$.
\end{proof}

In our case, we have $\gc_{1}\approx 1 + \Theta(\ga_{0}), \ga_{0}=\Theta(n^{-d}), t=\Theta(n^{d}), \gz_{1}=1+\Theta(n^{-2})$, thus the error term is $\exp(-\Theta(n^{d-2}))$ whereas the first term is $\Theta(1)$.

\section{Proofs of Auxiliary Results}\label{sec:proofA}

\subsection{Proof of Lemma~\ref{lem:gndecay}}
By symmetry and translation invariance, it suffices to assume that $0\le \xi_j\le \frac{n}{2}$ for all $j=1,2,\ldots,d$. Recall that $\pi(\xi) = n^{-d}$ is the uniform distribution. Let $T_{\text{mix}}$ be the mixing time. It is well known (see~\cite{LPW}*{Chapter 4}) that $T_{\text{mix}}=O(n^2)$ and there exist $\bar{\gc}>0, c>0$ such that
\begin{align*}
	\sup_{\xi\in\dZ_n^d}\abs{\pr_0(X_t=\xi)-\pi(\xi)} = O(n^{-d} e^{-\bar{\gc}t/n^2})
\end{align*}
for $t\ge cn^2$. In particular,
\begin{align*}
	\sum_{t=cn^2}^\infty \abs{\pr_0(X_t=\xi)-\pi(\xi)} &= O(n^{-d+2})\\
	\text{ and }\sum_{t=cn^2}^\infty t\abs{\pr_0(X_t=\xi)-\pi(\xi)} & = O(n^{-d+4}).
\end{align*}
Clearly, $\pr_0(X_t=\xi)=0$ for $t<\norm{\xi}_1$. Thus, what is left is to control the contribution for $\norm{\xi}_1\le t\le cn^2$.

Let $S_t$ be a $\frac12$\ndash lazy random walk on $\dZ^d$. We can couple the two random walks $(S_t,X_t)$ on $\dZ^d, \dZ_n^d$, respectively, by defining $X_t=S_t\mod n$, coordinate-wise. Then
\begin{align*}
	\pr_0(X_t=\xi) = \sum_{k\in\dZ^d, \norm{\xi+nk}_1 \le t} \pr_0(S_t=\xi+nk).
\end{align*}
In time $t=O(n^2)$, the random walk $S_t$ can visit upto distance of order $n$ with high probability and $\norm{\xi+nk}_1=\gO(\max\{n,\norm{\xi}_1\})$ for $k\neq 0$.
In fact, from the local limit theorem (see~\cite{Lawler}*{Theorem 1.2.1}), it follows that there exists $K>0$ such that
\begin{align*}
	\pr_0(S_t=\xi+nk)
	\lesssim \left(\frac{d}{2\pi t} \right)^{\frac{d}{2}} e^{-\frac{d\norm{\xi+nk}^2}{2t}} + \norm{\xi+nk}^{-2}t^{-\frac{d}{2}}
\end{align*}
for $t\ge K$. Assume that $\norm{\xi}_1\ge K$. We have
\begin{align*}
	I_1:=\sum_{t=\norm{\xi}_1}^{cn^2}\sum_{ \substack{ k\in\dZ^d \\ \norm{\xi+nk}_1 \le t, \norm{k}\le\sqrt{d} }} \norm{\xi+nk}^{-2} t^{-\frac{d}{2}}
	\lesssim \norm{\xi}^{-2}\sum_{t=\norm{\xi}_1}^{cn^2} t^{-\frac{d}{2}}
	\simeq \norm{\xi}^{-\frac{d+2}{2}}.
\end{align*}
If $\norm{k}>\sqrt{d}$, using $\norm{\xi/n}\le \sqrt{d}/2$, we have $\frac{1}{2}\norm{k}<\norm{k+ \xi/n}<2\norm{k}$ and
\begin{align*}
	\sum_{ \substack{ k\in\dZ^d                    \\ \norm{\xi+nk}_1 \le t, \norm{k}>\sqrt{d} }} \norm{\xi+nk}^{-2}
	 & \lesssim n^{-2} \sum_{k\in\dZ^d, \norm{k} <Ct/n} \norm{k}^{-2}
	\simeq n^{-d}t^{d-2}.
\end{align*}
Thus
\begin{align*}
	I_2:=\sum_{t=\norm{\xi}_1}^{cn^2}\sum_{ \substack{ k\in\dZ^d  \\ \norm{\xi+nk}_1 \le t, \norm{k}>\sqrt{d} }} \norm{\xi+nk}^{-2} t^{-\frac{d}{2}}
	 & \simeq n^{-d}\sum_{t=\norm{\xi}_1}^{cn^2}t^{\frac{d}{2}-2}
	\simeq n^{-2}.
\end{align*}
On the other hand,
\begin{align*}
	I_3:=\sum_{t=\norm{\xi}_1}^{cn^2}\sum_{ \substack{ k\in\dZ^d \\
			\norm{\xi+nk}_1 <t, \norm{k}\le \sqrt{d} } } t^{-\frac{d}{2}} e^{-\frac{d\norm{\xi+nk}^2}{2t}}
	& \lesssim \sum_{t=\norm{\xi}_1}^{cn^2} t^{-\frac{d}{2}} e^{-\frac{d\norm{\xi}^2}{2t}}\\
	& \simeq \int_0^\infty t^{-\frac{d}{2}} e^{-\frac{d\norm{\xi}^2}{2t}}
	\simeq \norm{\xi}^{2-d}
\end{align*}
and
\begin{align*}
	I_4
	& :=\sum_{t=\norm{\xi}_1}^{cn^2}\sum_{ \substack{ k\in\dZ^d \\ \norm{\xi+nk}_1 <t, \norm{k}> \sqrt{d} } } t^{-\frac{d}{2}} e^{-\frac{d\norm{\xi+nk}^2}{2t}}\\
	& \lesssim \sum_{t=\norm{\xi}_1}^{cn^2}\sum_{k\in\dZ^d, \norm{k} <Ct/n } t^{-\frac{d}{2}} e^{-\frac{dn^2\norm{k}^2}{8t}}
	\lesssim n^{2-d}.
\end{align*}
In the last inequality, we used the fact that $\sum_{k\in\dZ^d} s^{-\frac{d}{2}}e^{-C\|k\|^2/s}\lesssim 1$. Therefore,
\begin{align*}
	\abs{g_n(\xi)}
	 & \le \sum_{t=cn^2 }^\infty \abs{\pr_0(X_t=\xi)-\pi(\xi)} + \sum_{t=0}^{cn^2} \pi(\xi) + \sum_{t=\norm{\xi}_1}^{cn^2} \pr_0(X_t=\xi) \\
	 & \lesssim n^{2-d} + I_1+ I_2+ I_3+ I_4                                               \\
	 & \lesssim n^{2-d} + \norm{\xi}^{-\frac{d+2}{2}} + \norm{\xi}^{2-d} + n^{2-d} + n^{-2}.
\end{align*}

We now prove the second assertion. Since $g_n(\xi;z) = \sum_{t=0}^\infty z^t(\pr_0(X_t=\xi)-\pi(\xi))$, $g_n'(\xi)$ can be written as
\begin{align*}
	g_n'(\xi) =\frac{d}{dz}g_n'(\xi;z)\biggr|_{z=1} =\sum_{t=0}^\infty t(\pr_0(X_t=\xi)-\pi(\xi)).
\end{align*}
As before, it suffices to estimate the summation over $\norm{\xi}_1\le t\le cn^2$. Let $\gb\in(1,2)$. Then
\begin{align*}
	J_1
	&:=\sum_{t=\norm{\xi}_1}^{cn^2}\sum_{\substack{k\in\dZ^d                                   \\ \norm{\xi+nk}_1 <n^\gb}} \norm{\xi+nk}^{-2} t^{-\frac{d}{2}+1}\\
	 & \lesssim \norm{\xi}^{-2}\sum_{t=\norm{\xi}_1}^{cn^2} t^{-\frac{d}{2}+1} + n^{-2} \sum_{t=0}^{cn^2}\sum_{\substack{k\in\dZ^d \\ \norm{k} <Cn^{\beta-1}}} \norm{k}^{-2} t^{-\frac{d}{2}+1}\\
	 & \lesssim \norm{\xi}^{-\frac{d}{2}}+ n^{(2-d)(2-\beta)}
\end{align*}
and
\begin{align*}
	J_2
	&:=\sum_{t=\norm{\xi}_1}^{cn^2}\sum_{\substack{k\in\dZ^d         \\ \norm{\xi+nk}_1 <n^{\gb}}} t^{-\frac{d}{2}+1} e^{-\frac{\norm{\xi+nk}^2}{t}}\\
	 & \lesssim
	\sum_{t=\norm{\xi}_1}^{cn^2} t^{-\frac{d}{2}+1} e^{-\frac{d\norm{\xi}^2}{2t}}
	+ \sum_{t=\norm{\xi}_1}^{cn^2}\sum_{\substack{k\in\dZ^d          \\
	\norm{k} <Cn^{\gb-1} }} t^{-\frac{d}{2}+1} e^{-\frac{dn^2\norm{k}^2}{8t}} \\
	 & \lesssim \norm{\xi}^{4-d} + n^{4-d}.
\end{align*}
By~\cite{Lawler}*{Lemma 1.5.1}, we get
\begin{align*}
	J_3:=\sum_{t=\norm{\xi}_1}^{cn^2} \sum_{ n^\gb \le \norm{\xi+nk}_1 < t } t\pr_0(S_t=\xi+nk)
	 & \lesssim \sum_{t=\norm{\xi}_1}^{cn^2} \sum_{n^\gb \le \norm{\xi+nk}_1<t } t\pr_0(\norm{S_t}\ge cn^{\gb +1}) \\
	 & \lesssim \sum_{t=\norm{\xi}_1}^{cn^2} \sum_{n^\gb \le \norm{\xi+nk}_1 <t} t e^{-\frac{cn^{\gb +1}}{t}} \\
	 & \lesssim n^{d+4} e^{-cn^{\gb-1}}.
\end{align*}
Thus, the local central limit theorem yields that for large $\norm{\xi}$,
\begin{align*}
	|g_n'(\xi)|
	 & \le \sum_{t=cn^2 }^\infty t\abs{\pr_0(X_t=\xi)-\pi(\xi)} + \sum_{t=0}^{cn^2} t\pi(\xi) + \sum_{t=\norm{\xi}_1}^{cn^2} t\pr_0(X_t=\xi) \\
	 & \lesssim n^{4-d} + J_1+ J_2+ J_3                                                   \\
	 & \lesssim n^{4-d} + n^{(2-d)(2-\gb)}+ n^{d+4} e^{-cn^{\gb-1}} + \norm{\xi}^{-\frac{d}{2}} + \norm{\xi}^{4-d}
\end{align*}
as desired.

\subsection{Proof of Lemma~\ref{lem:flb}}
Take $\eps=G(0)/4>0$. By Lemma~\ref{lem:gndecay}, there exist positive integers $N_1$ and $K$ such that
\[
	\abs{g_n(0) - G(0)}\le \eps,
	\abs{g_n(\xi)}\le \eps \text{ for all } \norm{\xi}\ge K, \xi\in\dZ_n^d, n\ge N_1.
\]
Moreover, we can choose $N\ge N_1$, such that
\[
	\sup_{\norm{\xi}\le K} \abs{g_n(\xi)-G(\xi)}\le \eps\text{ for all } n\ge N.
\]
Fix $n\ge N$. Recall that $f_n(\xi)=(g_n(0)+g_n(\xi))/2$. For $\xi\in\dZ_n^d$ with $\norm{\xi}\ge K$ we have
\begin{align*}
	f_n(\xi)
	\ge \frac{1}{2} G(0) - \frac{1}{2}\abs{g_n(0) - G(0)} - \frac{1}{2}\abs{g_n(\xi)}
	\ge \eps.
\end{align*}
For $\xi\in\dZ_n^d$ with $\norm{\xi}\le K$ we have
\begin{align*}
	f_n(\xi)\ge \frac{1}{2} (G(0)+G(\xi)) - \frac{1}{2}\abs{g_n(0) - G(0)} - \frac{1}{2}\abs{g_n(\xi)-G(\xi)}
	\ge \eps.
\end{align*}

\subsection{Proof of Lemma~\ref{lem:tailprob}}
Let $k:=\abs{\{\gl_{v}\mid v\in\dZ_{n}^{d}\}}-1$. We order the elements of the set $\{1/\widehat{\gl}_v\mid v\in\dZ_{n}^{d}\setminus\{0\}\}$ as $\gz_{1}<\gz_{2}<\cdots<\gz_{k}$. Clearly we have,
\begin{align*}
	\gz_{1}
	=\frac{1}{1-\min_{v\in\dZ_{n}^{d}\setminus\{0\}}\gl_v}
	= \frac{1}{1-d^{-1}\sin^{2}(\pi/n)}
	= 1+ \frac{\pi^2}{dn^2}(1+o(1))
\end{align*}
where the minimum is achieved at $\pm e_{i}, i\in [d]$.
One can also see that
\begin{align*}
	f_n(\xi;z) =\sum_{i=1}^{k}\ga_{i}(\xi)(\gz_{i}-z)^{-1}
\end{align*}
where $\ga_i(\xi)\ge 0$ and $\ga_1(\xi)\neq 0$. We fix $\xi\in\dZ^d_n$ and simply write $\ga_i(\xi)=\ga_i$. Let $\alpha_0=\frac{1}{n^d}$ and $\gz_0=1$. By Lemma~\ref{lem:roots}, there exist the distinct roots $\gc_1(\xi)<\gc_2(\xi)<\cdots<\gc_k(\xi)$ for the equation
\begin{align*}
	P_{k}(z)
	:= \sum_{i=0}^{k}\ga_{i}\prod_{0\le j \le k, j\neq i} (\gz_{j}-z)
	=(\ga_{0}+(1-z)f_n(\xi;z)) \prod_{1\le j \le k} (\gz_{j}-z)
	=0.
\end{align*}
It then follows from Lemma~\ref{lem:series} and Lemma~\ref{lem:decomp}, for $t=\Theta(n^{d})$ and $f=\wh{f}$, that
\begin{align}
	\pr(\tau(0,\xi)>t)
	 & = \sum_{i=1}^k\frac{\gc_i(\xi)^{-t-1}}{1+ n^{-d}f'_n(\xi;\gc_i(\xi))f_n(\xi;\gc_i(\xi))^{-2}} \nonumber \\
	 & = \frac{\gc_1(\xi)^{-t-1}}{1+ n^{-d}f'_n(\xi;\gc_1(\xi))f_n(\xi;\gc_1(\xi))^{-2}} + e(t)\label{eq:prest}
\end{align}
where $\abs{e(t)} \le \gz_{1}^{-t} \le e^{-\Theta(n^{d-2})}$ and $ \gc_1(\xi) = 1+\frac{n^{-d}}{f_n(\xi;\gc_1(\xi))}$. From now on, we use $r_n(\xi)=\gc_1(\xi)$ to emphasize its dependence on $n$.

Let $z=r_n(\xi)$, then $z-1 = \frac{1}{n^d f_n(\xi;z)}$. We use the notations $f_n(\xi)=f_n(\xi;1)$ and $f_n'(\xi)=f_n'(\xi;1)$. There exist $\wt{z}$ and $\ol{z}$ between 1 and $\gc_1$ such that
\begin{align*}
	f_n(\xi;z)
	 & = f_n(\xi) + f'_n(\xi)(z-1) + \frac{1}{2}f_n''(\xi;\wt{z})(z-1)^2                                                \\
	 & = f_n(\xi) + \frac{f'_n(\xi)}{n^d f_n(\xi)}\left( 1+\frac{f'_n(\xi;\ol{z})}{n^d f_n(\xi)f_n(\xi;z)} \right)^{-1} +\frac{f_n''(\xi;\wt{z})}{2n^{2d}f_n(\xi;z)^2}.
\end{align*}
Note that, $\abs{f''(\xi;z)}\le |f''_n(0)|\le n^{-d}\sum_{v\in\dZ_n^d\setminus\{0\}}\abs{\gl_v}^{-3}$ for $\abs{z}\le 1$.
By Lemma~\ref{lem:gnorder} with $k=3$, we have $f_n''(\xi;\wt{z})= O(n^{3})$. It then follows from Lemma~\ref{lem:flb} that
\begin{align}\label{eq:fnest}
	f_n(\xi;r_n(\xi))
	= f_n(\xi)\left( 1+ \frac{f'_n(\xi)}{n^d f_n(\xi)^2} +O(n^{3-2d}) \right).
\end{align}
Applying~\eqref{eq:fnest} to $r_n(\xi)$, we get
\begin{align*}
	\log(r_n(\xi))
	 & = \log\left( 1+\frac{1}{n^d f_n(\xi;r_n(\xi))} \right)                                   \\
	 & = \log\left( 1+\frac{1}{n^d f_n(\xi)} \left(1- \frac{f'_n(\xi)}{n^{d}f_n(\xi)^2} + O(n^{3-2d}) \right) \right)       \\
	 & = \frac{1}{n^d f_n(\xi)} -\frac{1}{n^{2d}}\left(\frac{f'_n(\xi)}{f_n(\xi)^3}+ \frac{1}{2f_n(\xi)^2} \right) + O(n^{3-3d}),
\end{align*}
which yields
\begin{align*}
	r_n(\xi)^{-(t+1)}
	= e^{- \frac{u}{f_n(\xi)}}\left( 1+\frac{ u }{n^{d}}\left(\frac{f'_n(\xi)}{f_n(\xi)^3}+ \frac{1}{2f_n(\xi)^2} \right) + O(n^{3-2d}) \right).
\end{align*}
By~\eqref{eq:fnest} and~\eqref{eq:prest}, we conclude
\begin{align*}
	\pr(\tau(0,\xi)>t)
	 & = \frac{r_n(\xi)^{-t-1}}{1+ n^{-d}f'_n(\xi;r_n(\xi))f_n(\xi;r_n(\xi))^{-2}} + e(t)                                               \\
	 & = e^{- \frac{u}{f_n(\xi)}}\left( 1+\frac{ u }{n^{d}}\left(\frac{f'_n(\xi)}{f_n(\xi)^3}+ \frac{1}{2f_n(\xi)^2} \right) -\frac{f_n'(\xi)}{n^d f_n(\xi)^2}\right)+ O(n^{3-2d}).
\end{align*}

\section{Proofs of Main Results}\label{sec:proofM}

\subsection{Proof of Theorem~\ref{thm:var5}}
For simplicity, we will omit the subscript $n$ in $f_n,g_n$. By Lemma~\ref{lem:tailprob} and~\eqref{eq:var1}, we have
\begin{align*}
	n^{-d}\var( V_{n}^{(\ell)}(t) )
	 & = \sum_{\xi \in \dZ_{n}^{d}} (\pr(\tau(0,\xi)>t)^{\ell} -\pr(\tau(0)>t)^{2\ell})                                                                                          \\
	 & = \sum_{\xi \in \dZ_{n}^{d}} (e^{- \frac{\ell u}{f(\xi)}} -e^{- \frac{2\ell u}{f(0)}})+\frac{\ell}{n^{d}}\sum_{\xi \in \dZ_{n}^{d}} e^{- \frac{\ell u}{f(\xi)}}\left(u\left(\frac{f'(\xi)}{f(\xi)^3}+ \frac{1}{2f(\xi)^2} \right) -\frac{f'(\xi)}{f(\xi)^2}\right) \\
	 & \quad -2\ell e^{- \frac{2\ell u}{f(0)}}\left(u\left(\frac{f'(0)}{f(0)^3}+ \frac{1}{2f(0)^2} \right) -\frac{f'(0)}{f(0)^2}\right) +o(1).
\end{align*}
It follows from $\sum_{\xi\in\dZ^d_n} g(\xi)=0$ that
\begin{align*}
	&\sum_{\xi \in \dZ_{n}^{d}} (e^{- \frac{\ell u}{f(\xi)}} -e^{- \frac{2\ell u}{f(0)}})\\
	 &\qquad =e^{- \frac{2\ell u}{f(0)}}
	\left(\sum_{\xi \in \dZ_{n}^{d}} \left(e^{ \frac{\ell u g(\xi)}{f(0)f(\xi)}} -1-\frac{\ell u g(\xi)}{f(0)f(\xi)}\right)
	- \frac{\ell u }{f(0)^2}\sum_{\xi \in \dZ_{n}^{d}}\frac{ g(\xi)^2}{f(\xi)}\right).
\end{align*}
Recall that $f_n(\xi)\ge C>0$ for all $n$ and $\xi$ by Lemma~\ref{lem:flb}.
Since $e^t -1-t\le \frac12 t^2\max\{e^t, 1\}$ and $g(\xi)\le g(0)=f(0)$, we have
\begin{align*}
	e^{ \frac{\ell u g(\xi)}{f(0)f(\xi)}} -1-\frac{\ell u g(\xi)}{f(0)f(\xi)}
	 & \le \frac12 \left(\frac{\ell u g(\xi)}{f(0)f(\xi)}\right)^2\max\{e^{\frac{\ell u g(\xi)}{f(0)f(\xi)}}, 1\} \\
	 & \le \frac{\ell^2 u^2 }{2C^2}\max\{e^{\frac{\ell u }{C}}, 1\}g(\xi)^2.
\end{align*}
Since $\lim_{n\to\infty}g_n(\xi)=G(\xi)$ for each $\xi$ and
\begin{align}\label{eq:sumgsquare}
	\lim_{n\to\infty}\sum_{\xi\in\dZ^d_n} g(\xi)^2
	=\lim_{n\to\infty}\frac{1}{n^d}\sum_{v\in\dZ^d_n} \frac{1}{\gl_v^2}
	=\int_{[0,1]^d}\varphi_d(x)^2\, dx
	=\sum_{\xi\in\dZ^d} G(\xi)^2
	<\infty
\end{align}
for $d\ge 5$ where $\varphi_d(x)=d(\sum_{j=1}^d \sin^2(\pi x_j))^{-1}$, the dominated convergence theorem implies that
\begin{align*}
	\lim_{n\to\infty}\sum_{\xi \in \dZ_{n}^{d}} (e^{- \frac{\ell u}{f(\xi)}} -e^{- \frac{2\ell u}{f(0)}})
	 & =\sum_{\xi \in \dZ^{d}} \left(e^{ -\frac{2\ell u }{(G(0)+G(\xi))}} -e^{- \frac{2\ell u}{G(0)}}-2\ell u e^{- \frac{2\ell u}{G(0)}}\frac{G(\xi)}{G(0)^2}\right).
\end{align*}
By Lemma~\ref{lem:gndecay}, for any $\eps>0$, there exist $N$ and $K$ such that $\abs{f_n(\xi)-\frac12 G(0)}<\eps$ and $\abs{f'_n(\xi)-\frac12 G'(0)}<\eps$ for all $n\ge N$ and $\norm{\xi}\ge K$. Thus, one can see that
\begin{align*}
	&\frac{1}{n^{d}}\sum_{\xi \in \dZ_{n}^{d}} e^{- \frac{\ell u}{f(\xi)}}\left(u\left(\frac{f'(\xi)}{f(\xi)^3}+ \frac{1}{2f(\xi)^2} \right) -\frac{f'(\xi)}{f(\xi)^2}\right)\\
	&\qquad\qquad	\to
	2e^{- \frac{2\ell u}{G(0)}}\left(\frac{2uG'(0)}{G(0)^3}+ \frac{u}{G(0)^2} -\frac{G'(0)}{G(0)^2}\right)
\end{align*}
as $n\to\infty$. Therefore, it follows from~\eqref{eq:Gsquaresum} that
\begin{align*}
	\lim_{n\to\infty} n^{-d}\var( V_{n}^{(\ell)}(t) )
	 & =\sum_{\xi \in \dZ^{d}} \left(e^{ -\frac{2\ell u }{(G(0)+G(\xi))}} -e^{- \frac{2\ell u}{G(0)}}-2\ell u e^{- \frac{2\ell u}{G(0)}}\frac{G(\xi)}{G(0)^2}\right) \\
	 & \quad+2\ell u e^{- \frac{2\ell u}{G(0)}}\left(\frac{G'(0)}{G(0)^3}+ \frac{1}{2G(0)^2} \right)                                 \\
	 & =\nu_d(2\ell u/G(0))
\end{align*}
where
\begin{align*}
	\nu_d(u)
	:=e^{-u}\sum_{\xi \in \dZ^{d}} \left(\exp\left(\frac{u G(\xi)}{G(0)+G(\xi)}\right) -1- \frac{uG(\xi)}{G(0)}\right)
	+ ue^{-u}\left(\frac{G'(0)}{G(0)^2}+\frac{1}{2G(0)}\right).
\end{align*}
Using $G(0)+G'(0) = \sum_{\xi\in\dZ^d}G(\xi)^2$, we conclude
\begin{align*}
	e^{u}\nu_d(u)
	 & =\sum_{\xi \in \dZ^{d}} \left(\exp\left(\frac{u G(\xi)}{G(0)+G(\xi)}\right) -1- \frac{uG(\xi)}{G(0)} + \frac{uG(\xi)^2}{G(0)^2}\right)
	-\frac{1}{2G(0)}                                                                                    \\
	 & =\sum_{\xi \in \dZ^{d}} \left(\exp\left(\frac{u G(\xi)}{G(0)+G(\xi)}\right) -1- \frac{uG(\xi)}{G(0)+G(\xi)} + \frac{uG(\xi)^3}{G(0)^2(G(0)+G(\xi))}\right)- \frac{1}{2G(0)}     \\
	 & =\sum_{\xi \in \dZ^{d}} \left(\exp\left(\frac{u G(\xi)}{G(0)+G(\xi)}\right) -1- \frac{uG(\xi)}{G(0)+G(\xi)} + \frac{uG(\xi)^2(G(\xi)-\ind_{\{\xi=0\}})}{G(0)^2(G(0)+G(\xi))}\right).
\end{align*}

\subsection{Proof of Theorem~\ref{thm:var34}}
Note that $g_n'(0)\to \infty$ as $n\to\infty$ for $d=3,4$. Again, we will omit the subscript $n$ in $f_n,g_n$, for simplicity. We have
\begin{align*}
	\frac{1 }{n^d g'(0)}\var( V_{n}^{(\ell)}(t) )
	 & = \frac{1}{g'(0)}\sum_{\xi \in \dZ_{n}^{d}} (\pr(\tau(0,\xi)>t)^{\ell} -\pr(\tau(0)>t)^{2\ell})               \\
	 & = \frac{1}{g'(0)} \sum_{\xi \in \dZ_{n}^{d}} (e^{- \frac{\ell u}{f(\xi)}} -e^{- \frac{2\ell u}{f(0)}})
	+\frac{\ell}{n^{d}}\sum_{\xi \in \dZ_{n}^{d}}
	\left(\frac{ue^{- \frac{\ell u}{f(\xi)}}}{f(\xi)^3} -\frac{e^{- \frac{\ell u}{f(\xi)}}}{f(\xi)^2}\right) \frac{f'(\xi)}{g'(0)} \\
	 & \quad -2\ell \left(\frac{u e^{- \frac{2\ell u}{g(0)}}}{g(0)^3} -\frac{e^{- \frac{2\ell u}{g(0)}}}{g(0)^2}\right) +o(1).
\end{align*}
Here the $o(1)$ is $O(n^{3-d}/g'(0))$.
Let $\eps>0$. By Lemma~\ref{lem:gndecay}, there exist $N$ and $K$ such that for all $n\ge N$ and $\norm{\xi}\ge K$, $\abs{g(\xi)}<\eps$.
Note that there exist $C_1, C_2$ independent of $n$ and $\xi$ such that $0<C_1\le f(\xi)\le C_2<\infty$ by Lemma~\ref{lem:flb} and $\abs{g(\xi)}\le g(0)$. Consider
\[
	\psi(x) := u x^{-3}e^{-\ell u / x}- x^{-2}e^{-\ell u / x}
\]
for $x\in[C_1,C_2]$. One can easily see that $\psi$ is bounded and Lipschitz on $[C_1, C_2]$. Indeed, there exists $C_3$ such that $\abs{\psi(x)-\psi(y)}\le C_3(\abs{x-y}\wedge 1)$ and for $x,y\in [C_1, C_2]$ and for $i=1,2$. Thus,
\begin{align*}
	\abs{\psi(f(\xi))-\psi(g(0)/2)}
	 & =\abs{
		\left(\frac{ue^{- \frac{\ell u}{f(\xi)}}}{f(\xi)^3} -\frac{e^{- \frac{\ell u}{f(\xi)}}}{f(\xi)^2}\right)
		-\left(\frac{8ue^{- \frac{2\ell u}{g(0)}}}{g(0)^3} -\frac{4e^{- \frac{2\ell u}{g(0)}}}{g(0)^2}\right) }\\
	& \le C_3\abs{f(\xi)-\frac{1}{2}g(0)}.
\end{align*}
Note that
\begin{align*}
	\frac{1}{n^{d}}\sum_{\xi \in \dZ_{n}^{d}}
	\psi(g(0)/2) \frac{f'(\xi)}{g'(0)}
	=\frac{1}{2}\psi(g(0)/2)
	=\frac{4ue^{- \frac{2\ell u}{G(0)}}}{G(0)^3} -\frac{2e^{- \frac{2\ell u}{G(0)}}}{G(0)^2}.
\end{align*}
because $\sum_{\xi\in\dZ^d_n}g'(\xi)=0$, and that $0\le f'(\xi)\le g'(0)$ for all $n,\xi$. Since $\abs{ f(\xi)-\frac{1}{2}g(0) }<\eps/2$ for all $n\ge N$, we get
\begin{align*}
	\left|\frac{1}{n^d}\sum_{\xi \in \dZ_{n}^{d}}
	\left(\psi(f(\xi))-\psi(g(0)/2)\right) \frac{f'(\xi)}{g'(0)}\right|
	 & \quad\le
	\frac{\eps C_3}{2n^{d}}\sum_{\xi \in \dZ_{n}^{d}, \norm{\xi}\ge K} \frac{f'(\xi)}{g'(0)}
	+
	\frac{C_3}{n^{d}}\sum_{\xi \in \dZ_{n}^{d}, \norm{\xi}< K} \frac{f'(\xi)}{g'(0)} \\
	 & \quad\le
	\frac{\eps C_3}{2} +\frac{C_4 K^d}{n^{d}},
\end{align*}
which yields in turn that
\begin{align*}
	\lim_{n\to\infty}\frac{\ell}{n^{d}}\sum_{\xi \in \dZ_{n}^{d}}
	\left(\frac{ue^{- \frac{\ell u}{f(\xi)}}}{f(\xi)^3} -\frac{e^{- \frac{\ell u}{f(\xi)}}}{f(\xi)^2}\right) \frac{f'(\xi)}{g'(0)}
	 & = 2\ell e^{- \frac{2\ell u}{G(0)}} \left(\frac{2u}{G(0)^3} -\frac{1}{G(0)^2}\right).
\end{align*}

On the other hand, if
\begin{align}\label{def:w}
	w_n = w = 2\ell u / g_n(0),
\end{align}
then
\begin{align*}
	\sum_{\xi \in \dZ_{n}^{d}} (e^{- \frac{\ell u}{f(\xi)}} -e^{- \frac{2\ell u}{f(0)}})
	 & = e^{- w}\sum_{\xi \in \dZ_{n}^{d}} (e^{- \frac{w g(\xi)}{g(0)+g(\xi)}} -1).
\end{align*}
It follows from Lemma~\ref{lem:flb} and $\abs{e^{s}-1-s-s^2/2}\le C(a)|s|^3$ for $|s|\le a$, that
\begin{align*}
	\left|e^{- \frac{w g(\xi)}{g(0)+g(\xi)}} -1-\frac{w g(\xi)}{g(0)+g(\xi)}-\frac{w^2 g(\xi)^2}{2(g(0)+g(\xi))^2}\right|
	\le \frac{w^3 \abs{g(\xi)}^3}{6(g(0)+g(\xi))^3}
	\le \frac{w^3}{C} \abs{g(\xi)}^3.
\end{align*}
We claim that
\begin{align}\label{claim1}
	\lim_{n\to\infty}\frac{1}{g'(0)}\sum_{\xi\in\dZ^d_n} \abs{g(\xi)}^3 =0.
\end{align}
Indeed, this follows from
\begin{align*}
	\frac{\sum_{\xi\in\dZ^d_n} \abs{ g(\xi)}^3}{\sum_{\xi\in\dZ^d_n} g(\xi)^2}
	 & \le
	\frac{\sum_{\xi\in\dZ^d_n, \norm{\xi}<K} \abs{g(\xi)}^3}{\sum_{\xi\in\dZ^d_n} g(\xi)^2}
	+\eps\frac{\sum_{\xi\in\dZ^d_n, \norm{\xi}\ge K} g(\xi)^2}{\sum_{\xi\in\dZ^d_n} g(\xi)^2}
	\le
	\frac{C K^d}{\sum_{\xi\in\dZ^d_n} g(\xi)^2}
	+\eps<2\eps
\end{align*}
for large $n$, and $g'(0) = \sum_{\xi\in\dZ^d_n} g(\xi)^2 - g(0) \to \infty$ as $n\to\infty$. Thus, we obtain
\begin{align*}
	\lim_{n\to\infty}\frac{1}{g'(0)}\sum_{\xi\in\dZ^d_n}\left(e^{- \frac{w g(\xi)}{g(0)+g(\xi)}} -1-\frac{w g(\xi)}{g(0)+g(\xi)}-\frac{w^2 g(\xi)^2}{2(g(0)+g(\xi))^2}\right) =0.
\end{align*}
Using $\sum_{\xi\in\dZ^d_n} g(\xi) = 0$, one can write
\begin{align*}
	\sum_{\xi\in\dZ^d_n}\frac{ g(\xi)}{g(0)+g(\xi)}
	 & = -\sum_{\xi\in\dZ^d_n}\frac{ g(\xi)^2}{g(0)(g(0)+g(\xi))}\\
	&= \sum_{\xi\in\dZ^d_n}\frac{ g(\xi)^3}{g(0)^2(g(0)+g(\xi))}-\sum_{\xi\in\dZ^d_n}\frac{ g(\xi)^2}{g(0)^2}.
\end{align*}
By Lemma~\ref{lem:flb} and the claim~\eqref{claim1}, we conclude that
\begin{align*}
	\lim_{n\to\infty}\frac{1}{g'(0)}\sum_{\xi\in\dZ^d_n}\frac{ g(\xi)}{g(0)+g(\xi)}
	 & = -\frac{ 1}{G(0)^2}.
\end{align*}
Here, we used the fact that $\frac{1}{g'(0)}\sum_{\xi\in\dZ^d_n} g(\xi)^2\to 1$ as $n\to\infty$. Similarly,
\begin{align*}
	\lim_{n\to\infty}\frac{1}{g'(0)}\sum_{\xi\in\dZ^d_n}\frac{ g(\xi)^2}{(g(0)+g(\xi))^2}
	 & = \frac{ 1}{G(0)^2}.
\end{align*}
Thus, for $w$ as defined in~\eqref{def:w} we have
\begin{align*}
	\lim_{n\to\infty}\frac{e^{-w}}{g'(0)}\sum_{\xi \in \dZ_{n}^{d}} (e^{- \frac{w g(\xi)}{g(0)+g(\xi)}} -1)
	 & = 2\ell e^{-\frac{2\ell u}{G(0)}}\left(\frac{\ell u^2}{G(0)^4} - \frac{u}{G(0)^3}\right).
\end{align*}
Therefore,
\begin{align*}
	\lim_{n\to\infty}\frac{\var( V_{n}^{(\ell)}(t) ) }{n^d g_n'(0)}
	 & = 2\ell e^{-\frac{2\ell u}{G(0)}}\left(\frac{\ell u^2}{G(0)^4} - \frac{u}{G(0)^3}\right)
	+2\ell e^{- \frac{2\ell u}{G(0)}} \left(\frac{2u}{G(0)^3} -\frac{1}{G(0)^2}\right)        \\
	 & \qquad\qquad -2\ell e^{- \frac{2\ell u}{G(0)}}\left(\frac{u }{G(0)^3} -\frac{1}{G(0)^2}\right) \\
	 & =2\ell^2 u^2 e^{-\frac{2\ell u}{G(0)}}G(0)^{-4}.
\end{align*}
Let $\ga_d := \lim_{n\to\infty}\frac{g'(0)}{h_d(n) G(0)^{2}}$, then it follows from $\sum_{\xi\in\dZ^d_n}g_n(\xi)^2 = g_n(0)+g_n'(0)$, $g_n'(0)\to\infty$, and $g_n(0)<\infty$ for $d=3,4$ that
\begin{align*}
	\ga_d = \lim_{n\to\infty}\frac{1}{h_d(n) G(0;\dZ^d)^{2}}\sum_{\xi\in\dZ^d_n} g_n(\xi)^2.
\end{align*}
It follows from Lemma~\ref{lem:gnorder} that $\alpha_d$ is finite for $d=3,4$.
To simplify, $\ga_d$ further, first we consider the case $d=3$.

Using the spectral representation of $g_n(\xi)$ and~\eqref{def:eigen}, we see
\begin{align*}
	\frac{1}{n}\sum_{\xi\in\dZ^3_n} g_n(\xi)^2
	=\frac{1}{n^4}\sum_{v\in\dZ^3_n} \gl_v^{-2}
	=\frac{9}{\pi^4} \sum_{v\in\dZ^3_n} \left(\frac{n^2}{\pi^2}\sum_{j=1}^3 \sin^2(\pi v_j/n)\right)^{-2}.
\end{align*}
We split the summation over $v$ in the right hand side into two parts $\norm{v}\ge K_0$ and $\norm{v}<K_0$, where $K_0>0$ will be determined later. Then, we have
\begin{align*}
	\sum_{v\in\dZ^3_n, \norm{v}\ge K_0} \left(\frac{n^2}{\pi^2}\sum_{j=1}^3 \sin^2(\pi v_j/n)\right)^{-2}
	\le C \sum_{v\in\dZ^3_n, \norm{v}\ge K_0} \norm{v}^{-4}
	\le \frac{C}{K_0}.
\end{align*}
On the other hand, we choose $n$ large enough that
\begin{align*}
	\sum_{v\in\dZ^3_n, \norm{v}<K_0} \left(\frac{n^2}{\pi^2}\sum_{j=1}^3 \sin^2(\pi v_j/n)\right)^{-2}
	= \sum_{v\in\dZ^3_n, \norm{v}<K_0} \norm{v}^{-4} + o(1).
\end{align*}
By taking large enough $K_0$, we conclude
\begin{align*}
	\ga_3
	=\lim_{n\to\infty}\frac{g'_n(0)}{n G(0)^2}
	= \lim_{n\to\infty} \frac{1}{nG(0)^2}\sum_{\xi\in\dZ^3_n} g_n(\xi)^2
	= \frac{9}{\pi^4 G(0)^2} \sum_{v\in\dZ^3} \norm{v}^{-4}.
\end{align*}
Similarly, for $d=4$, we have
\begin{align*}
	\ga_4
	= \lim_{n\to\infty}\frac{1}{n^4\log n \cdot G(0)^2}\sum_{\xi\in\dZ^4_n} \gl_v^{-2}
	= \frac{16}{\pi^4 G(0)^2}\lim_{n\to\infty}\frac{1}{\log n }\sum_{v\in\dZ^4_n} \left(\frac{n^2}{\pi^2}\sum_{j=1}^4 \sin^2(\pi v_j/n)\right)^{-2}.
\end{align*}
Let $\eps\in(0,1)$. Using $|\sin t|\le C t$ for all $t$ and $\sin t = t + O(t^3)$ as $t\to 0$, we obtain
\begin{align*}
	\frac{1}{\log n}\sum_{\|v\|\ge n^{\eps}} \left(\frac{n^2}{\pi^2}\sum_{j=1}^4 \sin^2(\pi v_j/n)\right)^{-2}
	&\le \frac{C}{\log n}\sum_{\|v\|\ge n^{\eps}} \|v\|^{-4}\\
	& \lesssim \frac{1}{\log n} \int_{n^\eps}^{n} r^{-1}\, dr
	\lesssim \frac{\abs{\log \eps}}{\log n}
\end{align*}
and
\begin{align*}
	\lim_{n\to\infty}\frac{1}{\log n}\sum_{\|v\|\le n^{\eps}} \left(\frac{n^2}{\pi^2}\sum_{j=1}^4 \sin^2(\pi v_j/n)\right)^{-2}
	 & =\lim_{n\to\infty}\frac{1}{\log n}\sum_{\|v\|\le n^{\eps}} \|v\|^{-4}.
\end{align*}
Since this holds for any $\eps\in (0,1)$, we get
\begin{align*}
	\ga_4 = \frac{16}{\pi^4 G(0)^2}\lim_{n\to\infty}\frac{1}{\log n}\sum_{\|v\|\le n}\|v\|^{-4}.
\end{align*}

\subsection{Proof of Theorem~\ref{thm:rcov}}
A direct computation yields
\begin{multline*}
	n^{-d}\E(R_{n,\ell}^I(t)R_{n,\ell}^J(t))
	=\sum_{\xi\in\dZ^d_n}\pr(\gt(0,\xi)\le t)^{|I\cap J|}\pr(\gt(0,\xi)> t)^{|I^c \cap J^c|}\pr(\gt(0)\le t<\gt(\xi))^{|I\Delta J|}.
\end{multline*}
Let
\[
	|I\cap J|=k, \
	|I^c\cap J^c|=k', \
	|I\Delta J|=r, \
	\pr(\gt(0)>t)=a \text{ and }
	\pr(\gt(0,\xi)>t)=b.
\]
Then $k+k'+r=\ell$ and
\begin{align*}
	n^{-d}\E(R_{n,\ell}^I(t)R_{n,\ell}^J(t))
	 & =\sum_{\xi\in\dZ^d_n} (1-2a+b)^{k} b^{k'} (a-b)^{r}                               \\
	 & =\sum_{\xi\in\dZ^d_n} \sum_{i=0}^k \sum_{j=1}^{r}\binom{k}{i}\binom{r}{j}(1-2a)^i (-1)^{r-j}a^{j}b^{k-i+k'+r-j} \\
	 & =\sum_{\xi\in\dZ^d_n} \sum_{m=0}^{k+r} b^{\ell-m} \left(
	\sum_{j=(m-k)_+}^{r\wedge m} \binom{k}{m-j}\binom{r}{j}(1-2a)^{m-j} (-1)^{r-j}a^{j}
	\right)                                                      \\
	 & =\sum_{m=0}^{k+r} \theta_{k,r,m}(a) \sum_{\xi\in\dZ^d_n} b^{\ell-m}.
\end{align*}
Similarly, we have
\begin{align*}
	\E(R_{n,\ell}^I(t))\E(R_{n,\ell}^J(t))
	=n^{2d}\sum_{m=0}^{k+r} a^{2(\ell-m)} \theta_{k,r,m}(a)
\end{align*}
and so
\begin{align*}
	\cov(R_{n,\ell}^I(t), R_{n,\ell}^J(t))
	 & =\sum_{m=0}^{k+r}\theta_{k,r,m}(a)\cdot n^d\sum_{\xi\in\dZ^d_n}(b^{\ell-m}-a^{2(\ell-m)})\\
	& =\sum_{m=0}^{k+r}\theta_{k,r,m}(a) \cdot \gs_{n,\ell-m}^2(t).
\end{align*}
By Theorem~\ref{thm:var5}, we conclude
\begin{align*}
	\lim_{n\to\infty} \frac{1}{n^dh_d(n)} \cov( R_{n,\ell}^I(t), R_{n,\ell}^J(t) )
	=\sum_{m=0}^{\abs{I}+\abs{J}}\theta_{k,r,m}(e^{-\frac{u}{G(0)}}) \nu_d((\ell-m)u).
\end{align*}

\section{Discussion and Further Questions}\label{sec:discuss}

\begin{figure}[th]
	\centering
	\includegraphics[width=5.5in,page=1]{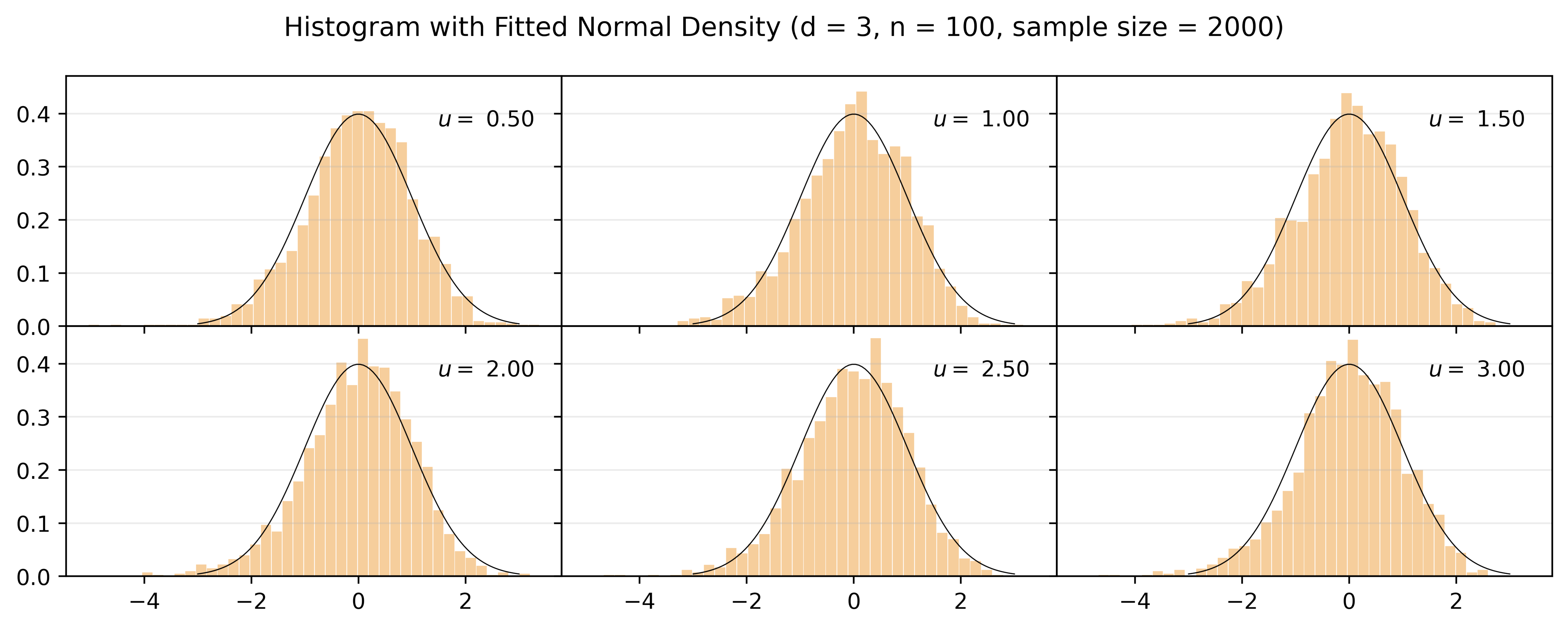}
	\includegraphics[width=5.5in,page=2]{data.pdf}
	\includegraphics[width=5.5in,page=3]{data.pdf}
	\caption{Histograms with fitted Gaussian density based on $2000$ samples in dimension $d=3,4$ and $5$, respectively.}
	\label{fig:data3}
\end{figure}

\begin{enumerate}
	\item As we have seen, our proofs are based on a precise tail probability of the hitting time at two points, which follows from an analytic observation on the generation function of hitting probabilities. A natural question is whether one can obtain an accurate tail behavior on the hitting time based on a probabilistic argument such as conditioning or constructing an appropriate coupling. Such an approach might give us an intuitive explanation of the asymptotic behavior and enable us to understand how the variance is created and grows.

	\item Once the limiting behavior of the mean and the variance of the size of the vacant set is known, it is natural to ask if the size of the vacant set with an appropriate normalization converges to a limiting distribution. Simulation results for $d=3,4,5$ (see Figure~\ref{fig:data3}) suggest that Gaussian central limit theorem hold for all $d\ge 3$ and $u>0$. It would also be interesting to figure out the covariance structure of the vacant sets at different times and see if the vacant set's size as a stochastic process indexed by $u$ converges to a Gaussian process.

	\item Instead of the size of the vacant set, one can consider the vacant set $\cV^{(\ell)}_n(un^d)$ as a set-valued process indexed by $u$. Then, it would be interesting to see the scaling limit of the vacant set up to a time proportional to the size of $\dZ^d_n$ as a subset of the continuum torus $[0,1]^d$. Note that up to the cover time of order $n^d\log n$, the scaling limit of the vacant set is studied by~\cites{Bel, MS17}.

	\item Miller~\cite{Miller2013a} studied the trace by two competing random walks on the discrete torus $\dZ^d_n$ until the torus is fully covered. It is natural to consider the trace at a time scale proportional to $n^d$. In particular, one can ask how the variance behaves as the time level grows and see if there is a time threshold for the limiting variance.
\end{enumerate}

\bibliography{rwrange.bib}
\end{document}